\newtheorem{theorem}{Theorem}
\newtheorem{lemma}[theorem]{Lemma}
\newtheorem{corollary}[theorem]{Corollary}
\newtheorem{proposition}[theorem]{Proposition}
\theoremstyle{definition}
\newtheorem{definition}[theorem]{Definition}
\numberwithin{theorem}{section}
\numberwithin{equation}{section}
\newcommand{\B}{\mathbb{B}}
\newcommand{\N}{\mathbb{N}}
\newcommand{\R}{\mathbb{R}}
\newcommand{\C}{\mathbb{C}}
\address{Ahmed Zeriahi, Institut de Math\'ematiques de Toulouse, 
Universit\'e de Toulouse, 
CNRS, UPS, 118 route de Narbonne, 
31062 Toulouse cedex 09, France}
\email{ahmed.zeriahi@math.univ-toulouse.fr}
\begin{document}

\thanks{The  author was partially supported by the ANR project GRACK}
  
%
% Paper metadata
%
\keywords{Subharmonic functions, Poisson formula, Modulus of continuity, quasi-plurisubharmonic functions.}

\subjclass[2010]{31B05, 31C05, 32U05}

\title[Remarks on the modulus of continuity]{Remarks on the modulus of continuity \\ of subharmonic functions }
\author{Ahmed Zeriahi}

\date{\today}

%\content

\maketitle
\begin{abstract} We introduce different classical characteristics used to regularize  a subharmonic function and compare them. 

As an application we give  a complete proof of a useful characterization of the modulus of continuity of such functions in terms of these characteristics under a technical condition. This result is extended to quasi-plurisubharmonic functions on a compact Hermitian manifold.

\end{abstract}
%\tableofcontents

\section{Introduction}

Given a subharmonic function $u$ on a domain $\Omega \subset \R^n$, we introduce various continuous approximating functions of  $u$  and use them to define various "partial moduli  of continuity" associated to $u$. These moduli have been used in many papers to measure in different ways the continuity of solutions to complex Monge-Amp\`ere equations on bounded domains in $\C^n$ as well as on compact complex manifolds (see \cite{GKZ08}, \cite{DDGKPZ14}, \cite{N18}, \cite{KN20}, \cite{BZ20}) .
 
The goal of this note is to clarify the relations between these moduli and establish estimates on the (full) modulus of continuity of a subharmonic function in terms of these partial moduli of continuity.

Let $\Omega \subset \R^n$ be a domain and  $u : \Omega \longrightarrow \R \cup \{- \infty \}$ be a subharmonic function on $\Omega$.

We fix $\delta_0 > 0$ so that $\Omega_{\delta_0} := \{ x \in \Omega \, ; \, \mathrm{dist} (x, \partial \Omega) > \delta_0\} \neq \emptyset$.

For $0 < \delta < \delta_ 0$, we can define the $\delta$-mean value function associated to  $u$ as follows
\begin{equation}
\Lambda_{\delta} u (x) := \int_{\B} u (x + \delta \xi) d \lambda_\B (\xi),
\end{equation}
for $x \in \Omega_\delta := \{ x \in \Omega \, ; \, \mathrm{dist} (x, \partial \Omega) > \delta\}$, where   $\lambda_\B $ is the normalized Lebesgue measure on the euclidean unit ball $\B \subset \R^n$.

We can also consider the $\delta$-max regularization of $u$ defined as follows:
\begin{equation}
\mathcal{M}_\delta u(x) := \max_{y \in \B} u(x + \delta y), 
\end{equation}
for $x \in \Omega_\delta $ and $0 < \delta < \delta_0$.

We have obviously $\Lambda_\delta u (x) \leq \mathcal{M}_\delta u ()$ for any $x \in \Omega_\delta$. Moreover these functions are continuous and subharmonic  on $\Omega_\delta$ and decrease to $u$ pointwise  on $\Omega$.

We consider a modulus of continuity $\kappa : \R^+ \to \R^+$ which satisfies the following condition:

\begin{equation}  \label{eq:conditionkappa1} 
\exists A > 0, \, \, \, \limsup_{t \to 0^+}  \left(\frac{\kappa (A t)}{A \kappa (t)}\right) < \frac{1}{2 n}\cdot 
\end{equation}
The main result of this note is the following.

\smallskip
\smallskip

{\bf Main Theorem. }%\begin{lemma} \label{lem:funlemma}
{\it Let  $u : {\Omega} \longrightarrow \R$ be a  bounded  subharmonic function on $\Omega$ and $\kappa$ a modulus of continuity satisfying the condition (\ref{eq:conditionkappa}).  Assume that  there exists a constant $C_0 > 0$ and $0< \delta_1 < \delta_0$ such that for $0< \delta < \delta_1$,
\begin{equation} \label{eq:meanvalue}
\Lambda_\delta u (x)  - u (x) \,  \leq \, C_0  \, \kappa(\delta), \, \, \, \text{for} \, \, \, \, x \in  \Omega_\delta.
\end{equation}

Then there  exists constants $B > 1$,  $ \varepsilon_0 > 0$ with $B  \varepsilon_0  < \delta_1$ and a constant $C > 0$ such that for $0< \delta < \varepsilon_0$ and $x \in \Omega_{B\delta}$, we have
 \begin{equation} \label{eq:maximum}
\mathcal{M}_\delta u(x)   - u (x) \, \, \leq  \, \, C  \, \kappa(\delta). 
\end{equation}
In particular $u$ is $\kappa$-continuous on any compact subset $E \Subset \Omega$.}

\smallskip

Observe that the condition of the theorem is satisfied for any harmonic function, regardless on its regularity at the boundary. Hence we cannot expect to conclude anything about the full modulus of continuity of $u$ on the whole domain $\Omega$.

However if we know about the behaviour of $u$ near the boundary we can get a better control on the modulus of continuity of $u$ on $\bar{\Omega}$.

\smallskip
\smallskip

{\bf Corollary. }%\begin{lemma} \label{lem:funlemma}
{\it Let  $u : {\Omega} \longrightarrow \R$ be a  bounded  subharmonic function on $\Omega$ and $\kappa$ a modulus of continuity satisfying the condition (\ref{eq:conditionkappa1}).  

Assume that $u$ satisfies  the condition (\ref{eq:meanvalue}) and  extends as a $\kappa$-continuous function near the boundary.  Then $u$ is $\kappa$-continuous on $\bar{\Omega}$ i.e. 
\begin{equation} \label{eq:MCestimate}
\vert u (x) - u (y) \vert \leq L  \, \kappa (\vert x - y \vert),
\end{equation}
for any $x, y \in \bar{\Omega}$, where $L > 0$ is a uniform constant.}

 Let us recall the definition of  $\kappa$-continuity near the boundary. Set for $\delta > 0$,  
 $$
 \tilde{\kappa}_u (\delta) := \sup \{\vert u (x) - u (y)\vert ;  y  \in \partial \Omega, x \in \Omega \cap B (y,\delta)\}\cdot
 $$
 It is easy to see that if $u$ is continuous on $\partial \Omega$, then $\lim_{\delta \to 0}   \tilde{\kappa}_u (\delta) = 0$. 
 Then we say that $u : \bar{\Omega} \longrightarrow \R$ is $\kappa$-continuous near the boundary if there exists a constant $C > 0$ such that $\tilde{\kappa}_u (\delta) \leq C \kappa (\delta)$ for $\delta >0$ small enough.

\section{Characteristics associated to a subharmonic functions}
Let $\Omega \subset \R^n$ be a  domain. We denote by $\mathcal{SH} (\Omega) \subset  L^{1}_{loc}  (\Omega)$ the set of subharmonic functions on $\Omega$. 
We will will first introduce differents characteristics associated to $u$ and compare them.  Then we prove some average estimates on them.
\subsection{Basic definitions}
Let  $u\in \mathcal{SH} (\Omega)$. We  will associate to  $u$ the following characteristics.
Set $ \Omega_{\delta} := \{ x \in \Omega ; \mathrm{dist} (x,\partial \Omega) > \delta\}$  for $0 < \delta \leq  \delta_0$, where   $ \delta_0 >0$ is fixed so that $\Omega_{\delta_0} \neq \emptyset$.

Fix $0 < \delta < \delta_0$ and  $x \in \Omega_{\delta}$. Define the max characteristic of $u$ as follows 
\begin{equation} \label{eq:max}
\mathcal{M}_\delta u (x) := \max_{\bar B (x,\delta)} u = \max_{\vert \xi \vert = 1} u(x + \delta \xi),
\end{equation}
where $\bar{B} (x,r) := \{y \in \R^n ; \vert y - x\vert \leq r \}$ is the closed euclidean ball of center $x$ and radius $r>0$.

We define the mean value volume characteristic of $u$ 
\begin{equation} \label{eq:meanvol}
\Lambda_{\delta} u (x)  :=  \frac{1}{\tau_{n} r^n} \int_{B(x,\delta)} u ( y) d \lambda_n (y) = \frac{1}{\tau_{n}} \int_{\B} u (x + \delta y) d \lambda_n (y),
\end{equation}
where $d \lambda_n$ is the Lebesgue measure on $\R^n$ and $\tau_n := \lambda_n (\B)$ is the volume of the unit ball $\B \subset \R^n$.

We define the mean value area characteristic of $u$ 
\begin{equation} \label{eq:meanarea}
\mathcal{A}_\delta u (x)  := \frac{1}{\sigma_{n-1} } \int_{\mathbb S} u (x + \delta \xi) d \sigma (\xi),
\end{equation}
where $ d \sigma$ is the area measure of the unit sphere $\mathbb S = \partial \B$ and $\sigma_{n-1}$ is the area of the unit sphere $\mathbb S \subset \R^n$.

We consider more general mean value characteristics associated to the subharmonic function $u$.

 Let $\rho $ be a radial bounded Borel function with compact support in the unit ball $\B \subset \R^N$ such that $\int_\B \rho (x) d \lambda_n (x) = 1$. This means in spherical coordinates that
 \begin{equation} \label{eq:normalization}
 \sigma_{n-1}.  \int_0^1 \rho (r) r^{n -1} d r = 1.
 \end{equation}
 
 For $\delta >0$ and $x \in \R^n$,  we set
$\rho_\delta (x) := \delta^{-n} \rho(x\slash \delta)$.
Then it's well known that $\rho_\delta\to \epsilon_0, $ in the sense of distributions on $\R^n$ as $\delta\to 0$, where $\epsilon_0$ is the unit mass Dirac distribution at the origin. 
  
For $x \in \Omega_\delta$, the smooth mean value characteristic of $u$ is defined by

\begin{equation} \label{eq:meanvolsmooth}
\mathcal{R}_\delta u (x)= u \star \rho_\delta(x) := \int_{\B (x,\delta)} u (\xi) \rho_\delta (x - \xi) d \lambda_n (\xi) = \int_{\B} u (x + \delta \xi) \rho  (\xi) d \lambda_n (\xi) .
 \end{equation}
Observe that if $\rho =  \frac{1}{\tau_n} {\bf 1}_{\B}$ then $\mathcal{R}_\delta u (x) = \Lambda u_{\delta} (x)$.

It is well known that all these functions are non decreasing in $\delta$ and convex in the variable $t = k_n (r)$, where $k_2 (r) = \log r$ and $k_n (r) := - r^{2 - n}$ when $n \geq 3$ (see \cite{AG01}). 
 
 \subsection{Comparison of characteristics}
We want to compare all these characteristics for a subharmonic function.

\begin{lemma} \label{lem:comparison1}  Let  $\Omega \subset \R^n$ be an open set and  $u \in  \mathcal{SH} (\Omega)$. Then for any $0< \delta < \delta_0$ and $x \in \Omega_\delta$, we have 
$$
 b_n \left(\mathcal{A}_{\delta \slash 2} u (x) - u (x)\right) \leq  \mathcal{R}_{\delta} u  (x) - u (x)  \leq \mathcal{A}_\delta u  (x) - u (x), %\leq \mathcal{M}_\delta u (x) - u (x).
$$
In particular,
$$
 b_n \left(\mathcal{A}_{\delta \slash 2} u (x) - u (x)\right) \leq  \Lambda u_{\delta}  (x) - u (x)   \leq \mathcal{A}_\delta u (x) - u (x) \leq \mathcal{M}_\delta u (x) - u (x),
$$
where $b_n := \int_{1 \slash 2}^1 \rho  (r) r^{n-1} d r < 1 \slash \sigma_{n-1}$.
\end{lemma}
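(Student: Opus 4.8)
The plan is to exploit the sub-mean-value property together with the radial structure of $\rho$. First I would write $\mathcal{R}_\delta u(x)$ in spherical coordinates:
\begin{equation*}
\mathcal{R}_\delta u(x) = \int_\B u(x+\delta\xi)\rho(\xi)\,d\lambda_n(\xi) = \sigma_{n-1}\int_0^1 \rho(r) r^{n-1}\Bigl(\tfrac{1}{\sigma_{n-1}}\int_{\mathbb S} u(x+\delta r\omega)\,d\sigma(\omega)\Bigr)dr = \sigma_{n-1}\int_0^1 \rho(r) r^{n-1}\,\mathcal{A}_{\delta r} u(x)\,dr.
\end{equation*}
Then I would subtract $u(x)$ and use the normalization $\sigma_{n-1}\int_0^1\rho(r)r^{n-1}dr=1$ from \eqref{eq:normalization} to get
\begin{equation*}
\mathcal{R}_\delta u(x) - u(x) = \sigma_{n-1}\int_0^1 \rho(r) r^{n-1}\bigl(\mathcal{A}_{\delta r} u(x) - u(x)\bigr)dr.
\end{equation*}
Here $\rho\geq 0$ may be assumed (this is the one point that needs a word — for the stated inequalities to go through one wants $\rho\geq 0$; if $\rho$ is merely Borel one should either assume nonnegativity or restrict to that case). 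Since $u$ is subharmonic, $r\mapsto \mathcal{A}_{\delta r}u(x)$ is nondecreasing and $\mathcal{A}_{\delta r}u(x)-u(x)\geq 0$.

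The upper bound is then immediate: $\mathcal{A}_{\delta r}u(x)-u(x) \leq \mathcal{A}_\delta u(x)-u(x)$ for all $r\in(0,1]$ by monotonicity, so integrating against the probability density $\sigma_{n-1}\rho(r)r^{n-1}dr$ gives $\mathcal{R}_\delta u(x)-u(x)\leq \mathcal{A}_\delta u(x)-u(x)$. For the lower bound I would restrict the integral to $r\in[1/2,1]$, where the integrand $\mathcal{A}_{\delta r}u(x)-u(x)$ is $\geq \mathcal{A}_{\delta/2}u(x)-u(x)\geq 0$ again by monotonicity in $r$; dropping the (nonnegative) part of the integral over $r\in(0,1/2)$ yields
\begin{equation*}
\mathcal{R}_\delta u(x)-u(x) \geq \sigma_{n-1}\Bigl(\int_{1/2}^1\rho(r)r^{n-1}dr\Bigr)\bigl(\mathcal{A}_{\delta/2}u(x)-u(x)\bigr) = b_n\bigl(\mathcal{A}_{\delta/2}u(x)-u(x)\bigr),
\end{equation*}
which is exactly the claim with $b_n := \int_{1/2}^1 \rho(r)r^{n-1}dr$; note $b_n\sigma_{n-1} < \sigma_{n-1}\int_0^1\rho(r)r^{n-1}dr = 1$ gives $b_n < 1/\sigma_{n-1}$.

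For the ``in particular'' chain I would specialize $\rho = \frac{1}{\tau_n}\mathbf{1}_\B$, which gives $\mathcal{R}_\delta u = \Lambda_\delta u$ and reproduces the first two inequalities verbatim; the middle inequality $\Lambda_\delta u(x)-u(x)\leq \mathcal{A}_\delta u(x)-u(x)$ also follows directly from the displayed identity since $\mathcal{A}_{\delta r}u(x)\leq \mathcal{A}_\delta u(x)$; and the last inequality $\mathcal{A}_\delta u(x)-u(x)\leq \mathcal{M}_\delta u(x)-u(x)$ is trivial because the spherical average is bounded above by the maximum over $\bar B(x,\delta)$. I do not expect a serious obstacle here; the only subtlety is the sign of $\rho$, which should be flagged (the paper's convention, or an explicit hypothesis $\rho\geq 0$), and making sure the Fubini/polar-coordinate rewriting of $\mathcal{R}_\delta u$ is justified by the local integrability of $u$ and boundedness of $\rho$.
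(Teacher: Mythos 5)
Your proposal is correct and follows essentially the same route as the paper: rewrite $\mathcal{R}_\delta u$ in spherical coordinates as $\sigma_{n-1}\int_0^1\rho(r)r^{n-1}\mathcal{A}_{r\delta}u(x)\,dr$, subtract $u(x)$ using the normalization of $\rho$, and exploit that $s\mapsto\mathcal{A}_s u(x)-u(x)$ is nonnegative and nondecreasing, then specialize $\rho=\frac{1}{\tau_n}\mathbf{1}_\B$ for the chain involving $\Lambda_\delta u$ and use $\mathcal{A}_\delta u\leq\mathcal{M}_\delta u$. Your remarks on $\rho\geq 0$ and on the placement of the factor $\sigma_{n-1}$ in the definition of $b_n$ only make explicit conventions the paper leaves implicit (and states inconsistently), so they do not affect the argument.
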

\begin{proof}
Fix $0 < \delta < \delta_0$ and $x  \in \Omega_\delta$.  Using spherical coordinates, we see that 
\begin{eqnarray*}
\mathcal R_\delta u (x)  & = & \int_0^1 \rho (r) r^{n-1} \, \left (\int_{\mathbb S} u (x + r\delta\xi) d \sigma (\xi)\right) d r \\
& = & \sigma_{n-1} \int_0^1 \rho (r) r^{n-1} \, \mathcal{ A}_{r \delta} u (x) d r.
\end{eqnarray*}
Therefore by the equation (\ref{eq:normalization}), we deduce that for any fixed $x \in \Omega_\delta$
\begin{equation} \label{eq:intmean}
\mathcal R_\delta u (x) - u (x) = \sigma_{n-1} \int_0^1 \rho (r) r^{n-1} \left(\mathcal{A}_{r \delta} (x) - u (x)\right) d r.
\end{equation}
Since the function $]0,\delta] \ni s \mapsto \mathcal{A}_{s} u (x) - u (x)$ in  non-negative and non-decreasing, it follows that 

\begin{equation} \label{eq:fineq}
b_n (\mathcal{A}_{\delta\slash 2} u  (x) - u (x)) \leq \mathcal{R}_\delta u (x) - u (x)  \leq \mathcal{A}_{\delta} u (x) - u (x),
\end{equation}
where $b_n := \int_{1 \slash 2}^1 \rho (r) r^{N-1} d r < 1$.

Now applying the formula (\ref{eq:intmean}) with $\rho = \frac{1}{\tau_n} {\bf 1}_{\B}$, we obtain 
$$
 \Lambda _{\delta \slash 2} u  (x) - u (x) = n  \int_0^1 r^{n-1} (\mathcal{A}_{r {\delta \slash 2} } u (x) - u (x)) d r,
$$
since $\sigma_{n-1} = n \tau_n$.

Applying the inequality (\ref{eq:fineq}) we obtain for $0 < \delta < \delta_0$ and $x \in \Omega_{\delta}$
$$
\Lambda_{\delta \slash 2} u (x) - u (x)  \leq \mathcal{A}_{\delta \slash 2} u  (x) - u (x) \leq b_n^{-1} (\mathcal{R}_{\delta} u (x) - u (x)).
$$
\end{proof}

Now we want to compare the supnorm and the mean value of a subharmonic function on balls.

\begin{lemma}  \label{lem:comparison2}  There exists $\delta_0 > 0$  small enough and a constant $a_n > 0$ such that  for any $ 0 <  \delta < \delta_0$, $0 <  \theta < 1$ and  $x \in \Omega_{ \delta}$,
we have

\begin{eqnarray}
\mathcal{M}_{\theta \delta} u (x) - u (x)   &\leq &  c_n \left(\mathcal{A}_{\delta} u (x) - u (x)\right)  \nonumber \\
  &+&  \frac{  c_n 2^n  \theta }{(1  - \theta)^{n - 1}}  \int_{\mathbb S} \left(u (x + \delta y)  - u (x)\right)_+  \, d \sigma (y) \cdot  
\end{eqnarray}
In particular
\begin{eqnarray}
\mathcal{M}_{\delta \slash 2} u (x) - u (x)  & \leq & c_n \left(\mathcal{A}_{\delta} u (x) - u (x)\right)  \nonumber\\
& +&   4^n c_n \int_{\mathbb S} \left(u (x + \delta y)  - u (x)\right)_+  \, d \sigma (y).
\end{eqnarray}

\end{lemma}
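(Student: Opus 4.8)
The plan is to use the Poisson representation inequality on the ball $B(x,\delta)$ — legitimate since $x\in\Omega_\delta$ forces $\bar B(x,\delta)\subset\Omega$ — evaluated at the point where the maximum over the smaller ball is attained. Since $u$ is upper semicontinuous and $\bar B(x,\theta\delta)$ is compact, I would pick $z^\ast\in\bar B(x,\theta\delta)$ with $u(z^\ast)=\mathcal M_{\theta\delta}u(x)$ and write $z^\ast=x+\delta w$, $s:=|w|\le\theta<1$. Subharmonicity then gives
$$\mathcal M_{\theta\delta}u(x)=u(z^\ast)\ \le\ \int_{\mathbb S}P(w,\xi)\,u(x+\delta\xi)\,d\sigma(\xi),\qquad P(w,\xi):=\frac1{\sigma_{n-1}}\,\frac{1-|w|^2}{|w-\xi|^n},$$
the Poisson kernel of the unit ball $\B$. (If $u(x)=-\infty$ there is nothing to prove, since $\mathcal A_\delta u(x)$ is finite.)

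Next I would split the kernel as $P(w,\xi)=\sigma_{n-1}^{-1}\bigl(1+g(\xi)\bigr)$ with $g(\xi):=(1-s^2)|w-\xi|^{-n}-1$. The constant part reproduces exactly $\sigma_{n-1}^{-1}\int_{\mathbb S}u(x+\delta\xi)\,d\sigma=\mathcal A_\delta u(x)$, while $\int_{\mathbb S}P(w,\cdot)\,d\sigma=1$ forces $\int_{\mathbb S}g\,d\sigma=0$, so $u(x)$ may be subtracted under the $g$-integral. Subtracting $u(x)$ from both sides gives
$$\mathcal M_{\theta\delta}u(x)-u(x)\ \le\ \bigl(\mathcal A_\delta u(x)-u(x)\bigr)+\frac1{\sigma_{n-1}}\int_{\mathbb S}g(\xi)\,\bigl(u(x+\delta\xi)-u(x)\bigr)\,d\sigma(\xi),$$
so everything reduces to estimating the last integral.

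For that I would first bound $g$ pointwise. Since $|w-\xi|$ ranges over $[1-s,1+s]$ and $1-s^2=(1-s)(1+s)$, an elementary case split does it: on $\{\,|w-\xi|^n\le 1-s^2\,\}$ factor $(1-s)$ out of $\bigl|\,1-s^2-|w-\xi|^n\,\bigr|$ and invoke Bernoulli's inequality; on the complement divide by $|w-\xi|^n\ge(1-s)(1+s)$ and use convexity of $t\mapsto(1+t)^{n-1}$ on $[0,1]$. This yields $|g(\xi)|\le C_n\, s\,(1-s)^{-(n-1)}\le C_n\,\theta\,(1-\theta)^{-(n-1)}$, the last step because $s\mapsto s(1-s)^{-(n-1)}$ is nondecreasing, with $C_n$ of order $2^n$. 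Then, writing $u(x+\delta\xi)-u(x)=v_+-v_-$ with $v_\pm\ge 0$ and using $g(v_+-v_-)\le|g|(v_++v_-)=|g|\,\bigl|u(x+\delta\xi)-u(x)\bigr|$, I obtain $\int_{\mathbb S}g\,(u(x+\delta\xi)-u(x))\,d\sigma\le C_n\,\theta\,(1-\theta)^{-(n-1)}\int_{\mathbb S}\bigl|u(x+\delta\xi)-u(x)\bigr|\,d\sigma$.

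Finally I would remove the negative part: since $\mathcal A_\delta u(x)-u(x)\ge 0$ by subharmonicity, $\int_{\mathbb S}|u(x+\delta\xi)-u(x)|\,d\sigma=2\int_{\mathbb S}v_+\,d\sigma-\sigma_{n-1}\bigl(\mathcal A_\delta u(x)-u(x)\bigr)\le 2\int_{\mathbb S}\bigl(u(x+\delta\xi)-u(x)\bigr)_+\,d\sigma$. Assembling the pieces and absorbing the dimensional factors into a single constant $c_n$ gives the stated inequality; taking $\theta=\tfrac12$, where $2^n\theta(1-\theta)^{-(n-1)}=4^{n-1}\le 4^n$, gives the "in particular" statement. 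I expect the only delicate point to be the pointwise bound on $g$: the computation is routine calculus, but one must take care to extract the factor $\theta$ and, above all, to land on the exponent $n-1$ rather than $n$ in $(1-\theta)$ — this gain comes exactly from the two "extra" factors $(1\pm s)$ hidden inside $1-s^2$, one of which cancels a power of $(1-s)$ coming from the denominator $|w-\xi|^{-n}$.
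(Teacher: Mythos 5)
Your proposal is correct, and it rests on the same engine as the paper: the Poisson--Jensen inequality on $B(x,\delta)$ (legitimate exactly because $x\in\Omega_\delta$ gives $\bar B(x,\delta)\subset\Omega$) together with the elementary bounds $1-s\le|w-\xi|\le 1+s$ for the kernel, which is where the factor $\theta(1-\theta)^{-(n-1)}$ comes from in both arguments. The bookkeeping, however, is genuinely different. The paper splits the sphere according to the sign of $u(x+\delta y)-u(x)$, applies the upper Poisson bound on the positive set and the lower bound on the negative set, and then discards the negative-part term ("the last term is non positive") -- a step that is sensitive to the sign convention for $(\cdot)_-$ and, as written, actually produces a nonnegative term $n c_n r\int_{\mathbb S}(u(x+\delta y)-u(x))_-\,d\sigma$ that cannot simply be dropped. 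You instead write the kernel as $\sigma_{n-1}^{-1}(1+g)$ with $\int_{\mathbb S}g\,d\sigma=0$, bound $\|g\|_\infty\le C_n\,\theta(1-\theta)^{-(n-1)}$ pointwise (your two-case estimate does give $C_n\le \max(n,2^{n-1})$, as claimed), and then convert $\int_{\mathbb S}|u(x+\delta\xi)-u(x)|\,d\sigma\le 2\int_{\mathbb S}(u(x+\delta\xi)-u(x))_+\,d\sigma$ using $\mathcal A_\delta u(x)\ge u(x)$; this is precisely the correct way to absorb the negative part, at the cost of at most a factor $2$, and it yields the $2^n$-type constant of the statement. The only discrepancy with the literal statement is the coefficient in front of $\mathcal A_\delta u(x)-u(x)$: you get $1$ rather than $c_n=1/\sigma_{n-1}$, but the paper's own derivation produces the same coefficient (since $c_n\sigma_{n-1}=1$), and the lemma is only ever invoked with unspecified dimensional constants, so nothing is lost. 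A further small advantage of your route is that it works directly on $\Omega_\delta$, as the statement requires, whereas the paper's rescaling step restricts to $x\in\Omega_{2\delta}$.
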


\begin{proof}
Assume first that $u$ is subharmonic in a neighbourhood of the closed ball $\bar{\B}$. It follows from Poisson-Jensen formula for the unit ball $ \B$ (see \cite{AG01}) that
\begin{equation} \label{eq:PJ-Ineq}
u (x) \leq \int_{\mathbb S} P (x,y) u (y) d \sigma (y), \, \, \, x \in \B,
\end{equation}
where
$$
P (x,y) := c_n \frac{1- \vert x\vert^2}{\vert x - y\vert^n}, (x,y)\in \B \times \partial \B,
$$
is the Poisson kernel of the unit ball $\B \subset \R^n$.

Since $\int_{\mathbb S} P (x,y) d \sigma (y) = 1$, it follows from (\ref{eq:PJ-Ineq}) that for any $x \in \B$,
\begin{equation} \label{eq:Ineg1}
u (x) - u (0) \leq \int_{\mathbb S} P (x,y) (u (y) - u (0)) d \sigma (y)  = I^+ (u) + I^- (u),
\end{equation}
where
\begin{equation} \label{eq:Iplus}
I^+(u) := \int_{\{y \in \mathbb S; u (y) \geq u (0)\}} P (x,y)  (u (y)  - u (0)) \, d \sigma (y), 
\end{equation}
and 
\begin{equation} \label{eq:Imoins}
I^-(u) :=  \int_{\{y \in \mathbb S; u (y) \leq u (0)\}} P (x,y)  ( {u (y)  - u (0)}) \, d \sigma (y).
\end{equation}
Since for $0< r < 1$ and $\vert x \vert = r$, we have
\begin{equation} \label{eq:PoissonIneq}
c_n \frac{1-r}{(1 + r)^{n-1}} \leq P (x,y) \leq c_n \frac{1+r}{(1 - r)^{n-1}}, 
\end{equation}
it follows from (\ref{eq:Iplus}) and (\ref{eq:PoissonIneq})  that for $\vert x\vert = r < 1$,
\begin{equation} \label{eq:I+Est}
I^+ (u) \leq  c_n \frac{1+r}{(1 - r)^{n-1}}  \int_{\{y \in \mathbb S; u (y) \geq u (0)\}}  (u (y)  - u (0)) \, d \sigma (y).
\end{equation}
Moreover, it follows from (\ref{eq:Imoins}) and (\ref{eq:PoissonIneq})  that for $\vert x\vert = r < 1$,
\begin{equation} \label{eq:I-Est}
I^-(u) \leq  c_n \frac{1-r}{(1 + r)^{n-1}}  \int_{\{y \in \mathbb S; u (y) \leq u (0)\}}  (u (y)  - u (0)) \, d \sigma (y).
\end{equation}
Therefore from (\ref{eq:I+Est}),  (\ref{eq:I-Est}) and (\ref{eq:Ineg1}), we deduce that
\begin{eqnarray*}
u (x) - u (0) &\leq & c_n \frac{1+r}{(1 - r)^{n-1}} \int_{\{y \in \mathbb S; u (y) \geq u (0)\}}  (u (y)  - u (0)) \, d \sigma (y) \\
& +  & c_n \frac{1-r}{(1 + r)^{n-1}} \int_{\{y \in \mathbb S; u (y) \leq u (0)\}}  (u (y)  - u (0)) \, d \sigma (y).
\end{eqnarray*}
Observe that for $r < 1$,
$$
\frac{1+r}{(1 - r)^{n-1}}  \leq  1 + \frac{2^n r}{(1 - r)^{n-1}} , \, \, \text{and}  \, \, \, \frac{1-r}{(1 + r)^{n-1}} \geq 1 - n r.
$$

This implies that
\begin{eqnarray*}
\max_{\vert x \vert = r} u (x) - u (0)  &\leq  & c_n  \int_{\mathbb S}  (u (y)  - u (0))  \, d \sigma (y) \\
&+&  \frac{c_n 2^n r}{(1 - r)^{n-1}} \int_{\mathbb S} (u (y) - u (0))_+ \, d \sigma (y) \\
&-&   n c_n r  \int_{\mathbb S} (u (y) - u (0))_- \, d \sigma (y) .
\end{eqnarray*}
Now in the general case fix $\delta_0 > 0$ small enough so that  $\Omega_{2 \delta_0} \neq \emptyset$. We fix $x \in \Omega_{2 \delta}$ so that $B (x,2 \delta) \subset \Omega$ and apply the previous inequality to the function $y \longmapsto u (x + \delta y)$ which is subharmonic in a neighbourhood of the unit ball $\bar{\B}$. 
We then obtain for $0 < r < 1$
\begin{eqnarray*}
\max_{\B (x, r \delta)} u  - u (x) &\leq  & c_n  \int_{\mathbb S}  (u (x + \delta y)  - u (x))  \, d \sigma (y) \\
& +& \frac{c_n 2^n r}{(1 - r)^{n-1}} \int_{\mathbb S} (u (x + \delta y)  - u (x))_+ \, d \sigma (y) \\
&-&  n c_n r  \int_{\mathbb S} (u (x + \delta y)  - u (x))_- \, d \sigma (y).
\end{eqnarray*}
Since the last term is non positive, the inequality of the lemma follows.
\end{proof}
We can easily deduce the following result.
\begin{corollary} Let $u $ be a bounded  subharmonic function on a bounded domain $\Omega \subset \R^n$. Assume that there exists $\alpha \in ]0,1]$ and $\kappa_1 > 0$ such that for any $ 0 < \delta < 2 \delta_0$ and $x \in \Omega_{\delta}$, we have 
$$
\mathcal{R}_\delta u (x) \leq u (x) + \kappa_1 \delta^{\alpha}.
$$
Then there exists $\kappa_2 > 0$ and $0< \delta_1 <<1$ such that for any $\delta < \delta_1 $ and $x \in \Omega_\delta$, we have
$$
\mathcal M_{\delta} u (x) \leq u (x) + \kappa_2 \delta^{\alpha \slash (1 + \alpha)}.
$$
\end{corollary}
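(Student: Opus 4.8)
The plan is to combine the two comparison lemmas above with a single optimization of a free scaling parameter. Throughout, put $M := \sup_{\Omega} |u| < \infty$, which is finite since $u$ is bounded.

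First I would transfer the hypothesis to the spherical-mean characteristic. Applying Lemma~\ref{lem:comparison1} at radius $2s$ (so that the term $\mathcal{A}_{(2s)/2}u = \mathcal{A}_s u$ appears) and then the assumption on $\mathcal{R}_{2s}u$, I get, for $0 < s < \delta_0/2$ and $x \in \Omega_{2s}$,
$$
\mathcal{A}_{s} u(x) - u(x) \;\leq\; b_n^{-1}\bigl(\mathcal{R}_{2s} u(x) - u(x)\bigr) \;\leq\; b_n^{-1}\kappa_1 (2s)^{\alpha} \;=:\; C_1\, s^{\alpha}.
$$
Next I would apply Lemma~\ref{lem:comparison2} at radius $s$ with the free parameter $\theta$ chosen so that $\theta s = \delta$, i.e. $\theta = \delta/s$; this requires $\delta \leq s/2$, and then $(1-\theta)^{n-1} \geq 2^{-(n-1)}$. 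Bounding the integral term of Lemma~\ref{lem:comparison2} crudely by $(u(x+sy)-u(x))_+ \leq 2M$ and using the previous display, I obtain, for $2\delta \leq s < \delta_0/2$ and $x \in \Omega_{2s}$,
$$
\mathcal{M}_{\delta} u(x) - u(x) \;\leq\; c_n C_1\, s^{\alpha} \;+\; c_n 2^{n} \cdot 2^{n-1}\,\frac{\delta}{s}\cdot 2 M \sigma_{n-1} \;=:\; C_1'\, s^{\alpha} + C_2\,\frac{\delta}{s}.
$$

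The only genuinely quantitative step is to optimize the right-hand side over the admissible radii $s$. The function $s \mapsto C_1' s^{\alpha} + C_2 \delta\, s^{-1}$ is minimized at $s(\delta) := \bigl(C_2\delta/(\alpha C_1')\bigr)^{1/(1+\alpha)}$, where it equals a constant (depending only on $n,\alpha,\kappa_1,M$) times $\delta^{\alpha/(1+\alpha)}$. Since $1/(1+\alpha) < 1$ we have $s(\delta)/\delta \to \infty$ and $s(\delta)\to 0$ as $\delta \to 0$, so there is $\delta_1 \in (0,1)$ such that the constraints $2\delta \leq s(\delta) < \delta_0/2$ hold for all $0 < \delta < \delta_1$; plugging $s = s(\delta)$ into the last display yields $\mathcal{M}_{\delta} u(x) - u(x) \leq \kappa_2\, \delta^{\alpha/(1+\alpha)}$, which is the desired estimate.

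I do not expect a serious obstacle: the argument is a mechanical chaining of Lemmas~\ref{lem:comparison1}--\ref{lem:comparison2} together with an elementary one-parameter minimization, the only real choice being the intermediate scale $s(\delta)$. Two points nonetheless deserve attention. First, boundedness of $u$ enters only through the crude estimate $(u(x+sy)-u(x))_+ \leq 2M$; since nothing better is available for the negative part $\int_{\mathbb S}(u(x+sy)-u(x))_- \, d\sigma$ of a general subharmonic function, the balancing $s^{\alpha}\leftrightarrow \delta/s$ is forced and the H\"older exponent necessarily drops from $\alpha$ to $\alpha/(1+\alpha)$. Second, the argument delivers the inequality at points $x \in \Omega_{2 s(\delta)}$ rather than on all of $\Omega_\delta$; because $s(\delta)\gg\delta$ this is formally a slightly smaller subdomain, but $s(\delta)\to 0$, so on any fixed compact $E\Subset\Omega$ the estimate holds once $\delta$ is small enough, which is all that is needed in the applications.
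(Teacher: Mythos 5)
Your argument is correct and is essentially the paper's own proof: the paper likewise chains Lemma~\ref{lem:comparison1} with Lemma~\ref{lem:comparison2}, bounds the positive part of the increment by the oscillation of $u$, and balances the two scales, its choice $\theta=\delta^{\alpha}$ followed by the substitution $\delta\mapsto\delta^{1/(1+\alpha)}$ being exactly your optimization $s(\delta)\asymp\delta^{1/(1+\alpha)}$. The mild domain mismatch you point out (the estimate is obtained on $\Omega_{2s(\delta)}$ rather than on all of $\Omega_{\delta}$) occurs, tacitly, in the paper's proof as well, so it is not a defect of your approach.
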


\begin{proof} Apply Lemma \ref{lem:comparison2} with $\theta = \delta^\alpha$. Then for $\delta < \delta_1 <  2^{- 1 \slash \alpha}$ and $x \in \Omega_{\delta^{1 + \alpha}}$, we have
$$
\mathcal{M}_{\delta^{1 + \alpha}} u (x) - u (x)   \leq  c_n \kappa_1 \delta^\alpha +    L_n  M \delta^{\alpha}.
$$
where $M := \text{osc}_{\bar \Omega} u$ is the oscillation of $u$ on $\bar \Omega$ and $L_n > 0$ is a uniform constant.
Relpacing $\delta$ by $\delta^{1 \slash (1 + \alpha)}$ we obtain that $u$ is H\"older continuous with exponent $\alpha \slash (1 + \alpha)$ and $\kappa_2 := a_n \kappa_1 + n M L_n$.
\end{proof}

\subsection{Average estimates } 
 We first recall a well know result which is important in applications. This result is shown in \cite{CKZ08}, beut we will recall the proof here for the convenince of the reader.
 \begin{lemma}
 Let $u$ be subharmonic function on  a bounded domaine $\Omega \subset \R^n$. Then there exists a uniform constant $e_n > 0$ such that for $0 < \delta < \delta_0$,
 $$
\int_{\Omega_ \delta} \left(\mathcal{A}_\delta u (x) - u (x)\right) d  y \leq e_n \Vert \Delta u\Vert_{\Omega_\delta} \, \delta^2.
 $$
 
 \end{lemma}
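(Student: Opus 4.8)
The plan is to reduce the statement to the classical formula expressing the growth of the spherical mean of a subharmonic function in terms of its Riesz mass, and then to integrate and apply Tonelli's theorem twice.

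First I would record the representation formula. Let $\mu := \Delta u \ge 0$ be the Riesz measure of $u$. For $u$ of class $C^2$ this is just the divergence theorem, $\frac{d}{ds}\, \mathcal{A}_s u(x) = \frac{1}{\sigma_{n-1} s^{n-1}} \int_{B(x,s)} \Delta u \, d\lambda_n$; for a general $u \in \mathcal{SH}(\Omega)$ one gets, by the usual regularization together with the facts that $s \mapsto \mathcal{A}_s u(x)$ is non-decreasing with $\mathcal{A}_s u(x) \downarrow u(x)$ as $s \downarrow 0$ (see \cite{AG01}), the identity, valid for every $x \in \Omega_\delta$,
$$\mathcal{A}_\delta u(x) - u(x) = \frac{1}{\sigma_{n-1}} \int_0^\delta \frac{\mu\bigl(B(x,s)\bigr)}{s^{n-1}}\, ds \,\ge\, 0 .$$

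Next I would integrate this over $x \in \Omega_\delta$ and apply Tonelli (the integrand is non-negative), obtaining
$$\int_{\Omega_\delta} \bigl(\mathcal{A}_\delta u(x) - u(x)\bigr)\, d\lambda_n(x) = \frac{1}{\sigma_{n-1}} \int_0^\delta s^{1-n} \Bigl( \int_{\Omega_\delta} \mu\bigl(B(x,s)\bigr)\, d\lambda_n(x) \Bigr)\, ds .$$
A second application of Tonelli, this time to $\int_{\Omega_\delta} \int_\Omega \mathbf{1}_{\{\vert x - y\vert < s\}}\, d\mu(y)\, d\lambda_n(x)$, gives $\int_{\Omega_\delta} \mu(B(x,s))\, d\lambda_n(x) = \int_\Omega \lambda_n\bigl(\{x \in \Omega_\delta : \vert x - y\vert < s\}\bigr)\, d\mu(y)$, which is at most $\lambda_n(B(0,s)) = \tau_n s^n$ times the mass of $\mu$ over the set of $y$'s for which the inner integral does not vanish — a set contained in $\bigcup_{x \in \Omega_\delta} B(x,\delta) \subset \Omega$, hence bounded by $\Vert \Delta u\Vert_{\Omega_\delta}$. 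Plugging this in and using $\sigma_{n-1} = n \tau_n$,
$$\int_{\Omega_\delta} \bigl(\mathcal{A}_\delta u(x) - u(x)\bigr)\, d\lambda_n(x) \le \frac{\tau_n \Vert \Delta u\Vert_{\Omega_\delta}}{\sigma_{n-1}} \int_0^\delta s \, ds = \frac{\Vert \Delta u\Vert_{\Omega_\delta}}{2 n}\, \delta^2 ,$$
which is the claim with $e_n = 1\slash(2n)$.

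The only real content is the first step — the formula for $\frac{d}{ds}\mathcal{A}_s u(x)$ together with the regularity of the spherical mean — which for a non-smooth subharmonic $u$ rests on the classical potential theory of spherical means (monotonicity in $s$, convexity in the variable $k_n(s)$, and the Gauss--Riesz identity for the mass carried by the ball); I would simply quote it from \cite{AG01} rather than reprove it. Everything after that is two uses of Tonelli and the elementary integral $\int_0^\delta s\, ds = \delta^2\slash 2$. One small bookkeeping point: the balls $B(x,\delta)$ with $x \in \Omega_\delta$ may protrude slightly outside $\Omega_\delta$, so strictly the mass appearing is that of $\Delta u$ on a $\delta$-neighbourhood of $\Omega_\delta$ inside $\Omega$; restricting $x$ to $\Omega_{2\delta}$, which is all that is used in the applications, gives the statement exactly as written.
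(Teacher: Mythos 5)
Your proof is correct and follows essentially the same route as the paper: the Gauss--Riesz (Poisson--Jensen) identity $\mathcal{A}_\delta u(x)-u(x)=\frac{1}{\sigma_{n-1}}\int_0^\delta s^{1-n}\mu(B(x,s))\,ds$, followed by Fubini--Tonelli and the elementary bound $\int_0^\delta s\,ds=\delta^2/2$, which is exactly the paper's argument up to the normalization of the Riesz measure. The bookkeeping point you raise (the mass really lives on a $\delta$-neighbourhood of $\Omega_\delta$ inside $\Omega$) is a genuine but harmless imprecision that the paper's own proof shares, so no further change is needed.
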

 \begin{proof}
Let $\mu := (1\slash 2 \pi)  \Delta u $ be  the Riesz measure of $u$ on $\Omega$.   It follows from Poisson- Jensen formula that for $x \in \Omega$ with $u(x) >  - \infty$, we have
 $$
 \mathcal{A}_\delta u (x) - u (x) = \int_0^\delta t^{1 - n} ( \mu  (B (x,t)) d t.
 $$
 Then integrating on $x$ over $\Omega$ and applying Fubini's Theorem we obtain
 \begin{eqnarray*}
 \mathcal{A}_\delta u (x) - u (x) d \lambda_n (x) &\leq &\int_0^\delta t^{1 - n} \int_{\Omega_\delta}  d \mu (\zeta) \int_{(B(\zeta,t) } d \lambda_n (z)  d t \\
 &\leq & \tau_n \mu (\Omega_\delta) \int_0^\delta \,  t \, d t =  \tau_n \mu (\Omega_\delta)  \delta^2\slash 2,
 \end{eqnarray*}
 where $\tau_n = \lambda_n (\B)$ is the volume of the unit ball in $\R^n$. This proves the Lemma.
 \end{proof}
\begin{corollary} Assume that $u $ is a  subharmonic function on a bounded domain $\Omega \subset \R^n$. Then for any $ 0 < \delta < \delta_0 \slash 2$, we have 
$$
\int_{\Omega_{2 \delta}} (\mathcal M_{\delta } u - u) d \lambda_n \leq p_n  \int_{\Omega_{2 \delta}} (\mathcal A_{2 \delta} u - u) d \lambda_n +   q_n \delta \Vert \nabla u\Vert_{L^1 (\Omega_{\delta})},
$$
where $p_n, q_n > 0$ are uniform constants.

In particular 
$$
\int_{\Omega_{2 \delta}} (\mathcal M_{\delta } u - u) d \lambda_n \leq p_n \Vert \Delta u\Vert_{\Omega_{2 \delta}} \, \delta^2 + q_n  \Vert \nabla u\Vert_{L^1 (\Omega_{\delta})} \, \delta,
$$
where $p_n, q_n > 0$ are uniform constants.
\end{corollary}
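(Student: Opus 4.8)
The plan is to derive the first inequality from Lemma~\ref{lem:comparison2} by integration, and then to bound the resulting spherical error term by the $L^1$-modulus of continuity of $u$, which is itself controlled by $\|\nabla u\|_{L^1}$ through the fundamental theorem of calculus along rays.

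First I would apply the second inequality of Lemma~\ref{lem:comparison2} at the scale $2\delta$ (i.e. with $\delta$ replaced by $2\delta$): for $0 < \delta < \delta_0/2$ and $x \in \Omega_{2\delta}$,
\begin{equation*}
\mathcal{M}_{\delta} u (x) - u (x) \, \leq \, c_n \bigl(\mathcal{A}_{2\delta} u (x) - u (x)\bigr) \, + \, 4^n c_n \int_{\mathbb S} \bigl(u (x + 2\delta y) - u (x)\bigr)_+ \, d\sigma(y).
\end{equation*}
Integrating over $x \in \Omega_{2\delta}$ and bounding $(t)_+ \leq |t|$, the whole matter reduces to the estimate
\begin{equation*}
\int_{\Omega_{2\delta}} \int_{\mathbb S} \bigl| u(x+2\delta y) - u(x) \bigr| \, d\sigma(y) \, d\lambda_n(x) \, \leq \, C_n \, \delta \, \|\nabla u\|_{L^1(\Omega_\delta)},
\end{equation*}
for a dimensional constant $C_n$; combined with the previous display this gives the first inequality of the Corollary, with $p_n = c_n$ and $q_n = 4^n c_n C_n$.

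To prove this $L^1$-bound I would first assume $u$ smooth, the general case being recovered at the end by mollification. Writing $u(x+2\delta y) - u(x) = 2\delta \int_0^1 \nabla u(x + 2\delta t y)\cdot y \, dt$ gives $|u(x+2\delta y) - u(x)| \leq 2\delta \int_0^1 |\nabla u(x+2\delta t y)|\,dt$; inserting this, applying Fubini, and for each fixed $t\in[0,1]$, $y\in\mathbb{S}$ performing the translation $z = x + 2\delta t y$, the left-hand side becomes $\leq 2\delta \int_0^1 \int_{\mathbb S} \|\nabla u\|_{L^1(\Omega_{2\delta}+2\delta t y)}\, d\sigma(y)\, dt$. \emph{This is where the only real difficulty lies}: the translated set $\Omega_{2\delta}+2\delta t y$ must be kept inside $\Omega_\delta$, but a translation by a vector of length up to $2\delta$ decreases the distance to $\partial\Omega$ by at most $2\delta$, so this holds verbatim only if the domain of integration is taken to be $\Omega_{3\delta}$ rather than $\Omega_{2\delta}$; then $\Omega_{3\delta}+2\delta ty\subset\Omega_\delta$ for all admissible $t,y$ and the bound follows with $C_n = 2\sigma_{n-1}$. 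The passage from $\Omega_{3\delta}$ to the stated $\Omega_{2\delta}$ is harmless (a change of $\delta_0$ and of constants), or can be carried out exactly by the finer splitting $|u(x+2\delta y)-u(x)|\leq |u(x+2\delta y)-u(x+\delta y)|+|u(x+\delta y)-u(x)|$, handling the second increment directly — its rays stay in $\Omega_\delta$ — and re-centring the first at $x+\delta y\in\Omega_\delta$.

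Finally, for a general subharmonic $u$ one applies the above to a standard mollification $u_\varepsilon$ of $u$, which is smooth, subharmonic on $\Omega_\varepsilon$, and decreases to $u$ as $\varepsilon\downarrow 0$: then $\mathcal{M}_\delta u_\varepsilon \downarrow \mathcal{M}_\delta u$ and $\mathcal{A}_{2\delta} u_\varepsilon \downarrow \mathcal{A}_{2\delta} u$ pointwise, while $u_\varepsilon \to u$ and $\nabla u_\varepsilon \to \nabla u$ in $L^1_{\mathrm{loc}}(\Omega)$, so the inequality survives the limit by monotone and dominated convergence. The ``in particular'' statement then follows by substituting the preceding average estimate, read at the scale $2\delta$, namely $\int_{\Omega_{2\delta}}(\mathcal{A}_{2\delta}u - u)\,d\lambda_n \leq 4\,e_n \|\Delta u\|_{\Omega_{2\delta}}\,\delta^2$, into the first inequality.
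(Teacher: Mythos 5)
Your overall route --- integrate the second inequality of Lemma \ref{lem:comparison2} at scale $2\delta$, bound the spherical error term by a gradient integral via the fundamental theorem of calculus along rays plus Fubini, then remove smoothness by mollification --- is the paper's scheme, and you have correctly isolated the delicate point: to produce $\mathcal M_\delta u$ together with $\mathcal A_{2\delta}u$ the lemma must be used at scale $2\delta$, so the error term involves the increment $u(x+2\delta y)-u(x)$, and for $x\in\Omega_{2\delta}$ the segment $[x,x+2\delta y]$ is only guaranteed to stay in $\Omega$, not in $\Omega_\delta$. The problem is that your proposed exact repair does not close this gap. The splitting $|u(x+2\delta y)-u(x)|\leq |u(x+2\delta y)-u(x+\delta y)|+|u(x+\delta y)-u(x)|$ does not help: after re-centring at $x'=x+\delta y$, which indeed lies in $\Omega_\delta$, the first increment is controlled by $\int_0^\delta |\nabla u(x'+ty)|\,dt$, and the points $x'+ty=x+(\delta+t)y$ have distance to $\partial\Omega$ only greater than $\delta-t$, which tends to $0$ as $t\to\delta$; after the change of variables the gradient is integrated over the sets $\Omega_{2\delta}+(\delta+t)y\subset\Omega_{\delta-t}$, which are not contained in $\Omega_\delta$, so the claimed bound by $\delta\,\|\nabla u\|_{L^1(\Omega_\delta)}$ does not follow. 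The total displacement is still $2\delta$ from a point of $\Omega_{2\delta}$, and cutting it into two pieces cannot shrink the region where $\nabla u$ is sampled.

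Your other fix, integrating over $\Omega_{3\delta}$, is correct but proves a different statement, and the assertion that the passage back to $\Omega_{2\delta}$ is ``harmless, a change of $\delta_0$ and of constants'' is not substantiated: all the domains $\Omega_{2\delta}$, $\Omega_{3\delta}$, $\Omega_\delta$ move with $\delta$, and if you rescale $\delta$ so that the left-hand domain becomes $\Omega_{2\delta}$, then $\mathcal M$ is taken at a smaller radius (wrong direction on the left) while the gradient norm is taken over a set larger than $\Omega_\delta$ (wrong direction on the right). For comparison, the paper's own argument keeps the increment at length $\delta$ --- for which the translation estimate $\int_{\Omega_{2\delta}}|u(x+\delta y)-u(x)|\,d\lambda_n(x)\leq \delta\,\|\nabla u\|_{L^1(\Omega_\delta)}$ is valid --- but an increment of length $\delta$ is consistent with Lemma \ref{lem:comparison2} only at scale $\delta$, i.e.\ with $\mathcal M_{\delta/2}u$ rather than $\mathcal M_\delta u$ on the left; so the difficulty you identified is genuinely present in the statement as formulated, and your attempt, as it stands, does not prove the Corollary with the stated domains $\Omega_{2\delta}$ and $\Omega_\delta$.
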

\begin{proof} We apply Lemma \ref{lem:comparison2} and integrate over $\Omega_{2\delta}$. Then for $0 < \delta < 2 \delta_0$,
\begin{eqnarray*}
\int_{\Omega_{2\delta}} (\mathcal M_{\delta} u (x) - u (x)) d \lambda (x) &\leq & c_n b_n \int_{\Omega_{2\delta}} \left(\mathcal A_{2\delta}  u (x) - u (x)\right) \\
& + &  c_n 2^{2 n - 2}  \int_{\mathbb S} \left(\int_{\Omega_{2\delta}}\vert u (x + \delta y)  - u (x)\vert  d \lambda (x)\right) \, d \sigma (y),
\end{eqnarray*}
where $p_n = c_n b_n$ and $q_ n :=  c_n 2^{2 n- 2}$.

We claim that for fixed $x \in \Omega_\delta$ and $0 < \delta < \delta_0/2$, we have
\begin{equation} \label{eq:mean-gradient}
\int_{\Omega_{2\delta}}\vert u (x + \delta y)  - u (x)\vert  d \lambda (x) \leq \delta \Vert \nabla u\Vert_{L^1 (\bar{\Omega}_{\delta})}.
\end{equation}

Indeed  assume first that $u$ is smooth. 
Now observe that for $\vert y \vert = 1$, we have
$$
u (x+ \delta y) - u (x) = \int_0^\delta D u (x+t y)\cdot y d t.
$$
Then using Fubini's Theorem, we obtain
$$
\int_{\mathbb S} \vert u (x+ \delta y) - u (x)\vert  \leq  \int_0^\delta \int_{\mathbb S}  \vert Du (x+t y)\cdot y\vert d \sigma (y)  d t.
$$
Integration over $\Omega_{2 \delta}$ leads to the inequality (\ref{eq:mean-gradient}). 

For a non smooth function $u$, we can approximate $u$ by a decreasing sequence $(u^j)$ of smooth subharmonic functions on a neighbourhood $V$ of $\bar{\Omega}_{\delta}$ so that  $D u^j \to D u$ in $L^1(V)$ and almost everywhere on $V$. Applying the inequality (\ref{eq:mean-gradient}) to the $u^j$'s  and passing to the limit, we obtain by Fatou's lemma the inequality  (\ref{eq:mean-gradient}) for $u$.
\end{proof}

\section{More general moduli of continuity}

We want to show that the sup-regualrization and the mean value regularisation have the same behaviour for a large class of moduli of continuity. 
Namely we will prove an important result which confirms a lemma  stated in  \cite{GKZ08}  and used in the litterature 
for a H\"older modulus of continuity. Chinh H. Lu discovered  recently a gap in the proof of \cite{GKZ08} which was fixed in \cite{LPT20} in the case of a compact hermitian manifold (without boundary) following the same scheme.

We will follow  the same scheme  as in \cite{GKZ08} and use  a new idea of \cite{LPT20}  to prove a more general result.

\subsection{A new characterization}
Let us first give some definitions. 

Let  $\kappa : [0,l] \longrightarrow \R^+$ be a modulus of continuity i.e.  a  continuous increasing subadditive function such that $\kappa (0) = 0$.

We will consider the following growth condition on $\kappa$. 
\begin{equation} \label{eq:conditionkappa}
\exists A >0, \, \, \,  \limsup_{t \to 0^+}  \left(\frac{2 n \kappa(A t}{ A \kappa(t)} \right)  \, < \, 1.
\end{equation}

Observe that this condition holds for a logarithmic H\"older modulus of continuity  defined by 
$$
\kappa_{\alpha,\beta} (t) = t^\alpha (-\log t)^{\beta}, \, \, 0 < t < t_0 < 1
$$  
with $0 \leq \alpha < 1$ and $\beta \in \R$, with $\beta < 0$ if $\alpha = 0$ and $t_0 >0$ is chosen so small that $\kappa_{\alpha,\beta} $ is concave on $[0,t_0]$.
However it's not satisfied by the modulus of continuity $\kappa_{1,\beta}$ with $\beta \leq  0$.

 We need another definition.
\begin{definition}
We say that a  function $u : \bar{\Omega} \longrightarrow \R$ is $\kappa$-continuous near  the boundary $\partial \Omega$ if there exits $\varepsilon_0 >0$ small enough such that  for any $\zeta \in \partial \Omega$ and $z \in \Omega$ with $\vert z - \zeta \vert \leq \varepsilon_0$, we have 
\begin{equation} \label{eq:contnearbd}
\vert u(z) - u(\zeta) \vert \leq \kappa(\vert z - \zeta\vert).
\end{equation}
\end{definition}
Observe that this condition implies the continuity of $u$ on $\partial \Omega$ and it is satisfied if there exists two functions $v,w$ $\kappa$-continuous near the boundary such that $v \leq u \leq w$ near the boundary  and $v=u=w$ on $\partial \Omega$.

We need to introduce one more characteristic  associated to $u$. For $0 < \delta < \delta_0$, we set
$$
\mathcal O_{\delta} u (x) := \text{osc}_{B (x,\delta)} u = \max \{ \vert u (y_1) - u (y_2) \vert ; y_1, y_2 \in B(x,\delta)\}\cdot
$$
    Now we can sate the main result of this note.
\begin{theorem} \label{thm:MCcontrol}
 Let  $\kappa$ be a modulus of continuity satisfying the condition (\ref{eq:conditionkappa}) and $ u :  \bar{\Omega} \longrightarrow \R$ be a bounded function which is  subharmonic on $\Omega$ and $\kappa$-continuous near  the boundary $\partial \Omega$.  
 
 Then the following properties are equivalent :

$(1)$ there exists a constant $L_1 > 0$ and $0< \delta_1 < \delta_0$ such that for $0< \delta < \delta_1$,
 $$
\mathcal M_\delta  u (x)   - u (x) \leq  L_1 \, \kappa(\delta), \, \, \, \text{for} \, \, \, \, x \in \Omega_\delta, 
$$ 

$(2)$ there exists constants $ L_2 > 0$ and $0< \delta_2 < \delta_0$ such that $B \delta_2 < \delta_0$ and for $0< \delta < \delta_2$,
$$
\mathcal{R}_\delta u(x)   - u (x) \leq L_2 \, \kappa(\delta), \, \, \, \text{for} \, \, \, \,x \in \Omega_{\delta}, 
$$

$(3)$ there exists a constants  $B > 1$, $L_3 > 0$ and $0< \delta_3 < \delta_0$ such that for $0< \delta < \delta_3$,
$$
\mathcal{O}_\delta u(x)  \leq L_3 \, \kappa(\delta), \, \, \, \text{for} \, \, \, \,x \in \Omega_{B \delta}, 
$$

 $(4)$  the function $u$ is $\kappa$-continuous on $\bar{\Omega}$ i.e. there exists a constant $L_3 >0$ and $0 < \delta_3 < \delta_ 0$ such that for any $x \in \bar{\Omega}$ and $y \in \bar{\Omega}$ with $\vert x - y \vert \leq \delta_3$, we have 
 $$
  \vert u(x) - u(y) \vert \leq   L_3 \, \kappa (\vert x -y\vert).
 $$
\end{theorem}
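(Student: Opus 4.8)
The plan is to prove the cycle $(1)\Rightarrow(2)\Rightarrow(3)\Rightarrow(4)\Rightarrow(1)$, in which only $(2)\Rightarrow(3)$ carries real content. Indeed $(1)\Rightarrow(2)$ is immediate from Lemma~\ref{lem:comparison1} (which gives $\mathcal{R}_\delta u-u\le\mathcal{M}_\delta u-u$), and $(4)\Rightarrow(1)$ is trivial, since $\kappa$-continuity of $u$ on $\bar{\Omega}$ forces $u(x+\delta\xi)-u(x)\le L_3\kappa(\delta)$ for $|\xi|=1$. For $(3)\Rightarrow(4)$ I would argue as follows: given $x,y\in\bar{\Omega}$ with $\delta:=|x-y|$ small, if $\mathrm{dist}(x,\partial\Omega)>2B\delta$ then $x,y\in B(x,2\delta)$ and $|u(x)-u(y)|\le\mathcal{O}_{2\delta}u(x)\le L_3\kappa(2\delta)\le 2L_3\kappa(\delta)$; otherwise both $x$ and $y$ lie within $(2B+1)\delta$ of $\partial\Omega$, and one bounds $|u(x)-u(y)|$ through their nearest boundary points $\zeta,\zeta'$ by $|u(x)-u(\zeta)|+|u(\zeta)-u(\zeta')|+|u(\zeta')-u(y)|$, each term being $\le C\kappa(\delta)$ by the near-boundary $\kappa$-continuity hypothesis (which also gives that $u|_{\partial\Omega}$ has modulus $\kappa$), using subadditivity of $\kappa$. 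Thus everything reduces to $(2)\Rightarrow(3)$, and for that it suffices, by the same ``max controls oscillation'' device used for $(1)\Rightarrow(3)$, to prove the interior estimate $\mathcal{M}_\delta u(x)-u(x)\le C\kappa(\delta)$ for all $x\in\Omega_{2\delta}$ and all small $\delta$.

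The engine is a one-step inequality. Fix $0<\theta<1$. For $x$ at distance $\ge 2\delta/\theta$ from $\partial\Omega$ (and $\delta/\theta$ below the threshold of hypothesis $(2)$), Lemma~\ref{lem:comparison1} together with $(2)$ yields $\mathcal{A}_{\delta/\theta}u(x)-u(x)\le L_2'\kappa(\delta/\theta)$ with $L_2'=2b_n^{-1}L_2$, while Lemma~\ref{lem:comparison2} — applied at scale $\delta/\theta$, and in the (cost-free) sharpened form where the factor $2^n$ is replaced by $n$ via Bernoulli's inequality $(1-r)^{n-1}\ge 1-(n-1)r$ — combined with the elementary pointwise bound $(u(x+(\delta/\theta)y)-u(x))_+\le\mathcal{M}_{\delta/\theta}u(x)-u(x)$ valid for $|y|=1$, produces
\[
\mathcal{M}_\delta u(x)-u(x)\ \le\ c_n L_2'\,\kappa(\delta/\theta)\ +\ \eta\,\bigl(\mathcal{M}_{\delta/\theta}u(x)-u(x)\bigr),\qquad \eta:=\frac{n\,\theta}{(1-\theta)^{n-1}},
\]
using $c_n\sigma_{n-1}=1$. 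Iterating this $m$ times moves from scale $\delta$ up to scale $\delta\theta^{-m}$,
\[
\mathcal{M}_\delta u(x)-u(x)\ \le\ c_n L_2'\sum_{j=1}^{m}\eta^{\,j-1}\kappa(\delta\theta^{-j})\ +\ \eta^{\,m}\bigl(\mathcal{M}_{\delta\theta^{-m}}u(x)-u(x)\bigr),
\]
legitimately provided $x\in\Omega_{2\delta\theta^{-m}}$ (and $2\delta\theta^{-m}$ stays below the threshold).

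Now I would choose $A$ as in condition~(\ref{eq:conditionkappa}) (enlarging $A$ to a large multiple if needed, which preserves the condition by subadditivity), set $\theta=1/A$, and fix a small radius $\rho_0$ below which the estimate in~(\ref{eq:conditionkappa}) already holds. Given $x\in\Omega_{2\delta}$, I take $m$ maximal with $2\delta\theta^{-m}\le\bar\rho:=\min(\mathrm{dist}(x,\partial\Omega),\rho_0)$, so that $\delta\theta^{-m}\asymp\bar\rho$, and the remaining term $\mathcal{M}_{\delta\theta^{-m}}u(x)-u(x)$ is handled by a base case: if $\mathrm{dist}(x,\partial\Omega)\ge\rho_0$ it is $\le\mathrm{osc}_{\bar\Omega}u=:M<\infty$ (this is where boundedness of $u$ is used); if $\mathrm{dist}(x,\partial\Omega)=d<\rho_0$ then, by monotonicity in the radius, it is $\le\mathcal{M}_{d/2}u(x)-u(x)\le 6\kappa(d)$, the last bound coming from estimating the points of $\partial B(x,d/2)$ — all within $3d/2<\rho_0$ of $\partial\Omega$ — through their nearest boundary points; and for points so close to $\partial\Omega$ that $m=0$ one gets $\mathcal{M}_\delta u(x)-u(x)\le C\kappa(\delta)$ directly from near-boundary $\kappa$-continuity. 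The decisive point of condition~(\ref{eq:conditionkappa}) is that, with $\theta=1/A$ and the sharpened Lemma~\ref{lem:comparison2} (which makes $\eta\approx n/A$), one has $\eta\cdot\kappa(\theta^{-1}s)/\kappa(s)\le\rho<1$ for all $s<\rho_0$, the constant $2n$ in~(\ref{eq:conditionkappa}) leaving exactly the needed margin. Hence the series is geometric and dominated by its first term $\kappa(\delta/\theta)\le(A+1)\kappa(\delta)$; moreover, rewriting the same inequality as $\kappa(\theta^{-1}s)\le(\rho/\eta)\kappa(s)$ and using $\delta\theta^{-m}\asymp\bar\rho$ together with concavity of $\kappa$ (which may be assumed), a short computation with $\eta\cdot(\rho/\eta)=\rho<1$ shows $\eta^m\cdot(\text{base})\le C\kappa(\delta)$ in both base cases. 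Collecting the estimates gives $\mathcal{M}_\delta u(x)-u(x)\le C\kappa(\delta)$ on $\Omega_{2\delta}$, hence $(3)$, and the cycle closes.

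The main obstacle is precisely the convergence of this iteration: one needs the product of the contraction factor $\eta$ coming from the Poisson-kernel estimate of Lemma~\ref{lem:comparison2} and the dilation factor $\kappa(\theta^{-1}s)/\kappa(s)$ of the modulus to be $<1$ for a suitable $\theta$ and all small $s$, and this is the whole purpose of hypothesis~(\ref{eq:conditionkappa}); matching its numerical constant with the geometric constants of Lemma~\ref{lem:comparison2} is the delicate bookkeeping, and it is the step where the argument sketched in \cite{GKZ08} was incomplete and where the idea of \cite{LPT20} enters. A second, more routine, difficulty is choosing the iteration depth $m$ as a function of the base point so that one always lands on a scale where either the trivial bound $\mathrm{osc}_{\bar\Omega}u$ (deep inside $\Omega$) or the near-boundary hypothesis (close to $\partial\Omega$) applies, leaving no intermediate regime uncontrolled.
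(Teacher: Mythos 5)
Your proposal is correct, but the engine of the key implication $(2)\Rightarrow(3)$ is genuinely different from the paper's. The paper proves this step via Lemma~\ref{lem:funlemma}: splitting the ball $B(\xi_0,A\delta)$ around a point realizing the oscillation (the GKZ device) yields the recursion $\mathcal{O}_\delta u(x_0)\le \frac{2n}{A}\,\mathcal{O}_{(A+1)\delta}u(x_0)+C\kappa(\delta)$, which is then iterated on the quotient $h(t)=\mathcal{O}_t u(x_0)/\kappa(t)$ (the LPT idea) up to a fixed macroscopic scale where $h$ is trivially bounded. You instead iterate the Poisson-kernel comparison of Lemma~\ref{lem:comparison2}, sharpened from $2^n$ to $n$ by Bernoulli's inequality --- a valid and in fact indispensable improvement, since with the stated factor $2^n$ the product of your contraction $\eta$ with the dilation $\kappa(As)/\kappa(s)\le\nu A$, $\nu<1/(2n)$, is of order $2^{n-1}/n$ and no longer below $1$ --- obtaining $\mathcal{M}_\delta u-u\le 2b_n^{-1}L_2\,\kappa(A\delta)+\eta\,(\mathcal{M}_{A\delta}u-u)$ with $\eta=\frac{n/A}{(1-1/A)^{n-1}}$, and you enlarge $A$ by the same subadditivity trick the paper uses so that $\eta\nu A<1$. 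Your two stopping regimes do close the iteration: with $\kappa(A^m\delta)\le(\rho/\eta)^m\kappa(\delta)$ and $\kappa(d)\le\lceil 2A\rceil\,\kappa(A^m\delta)$ one gets $\eta^m\cdot(\text{base term})\le C\kappa(\delta)$ both deep inside (via boundedness of $u$) and near $\partial\Omega$ (via the $\kappa$-continuity hypothesis); note that only subadditivity is used here, so the parenthetical appeal to concavity should be dropped (passing to a concave envelope would cost a factor $2$ in condition~(\ref{eq:conditionkappa})). What each route buys: the paper's oscillation recursion makes no boundary assumption in its interior lemma and follows the published GKZ/LPT scheme, whereas your route recycles Lemma~\ref{lem:comparison2} from Section~2 and, by invoking the boundary hypothesis in the base case, explicitly covers points of $\Omega_{B\delta}$ that are close to $\partial\Omega$ relative to the stopping scale --- a regime the iteration in Lemma~\ref{lem:funlemma} only reaches for points at a fixed distance (comparable to $\varepsilon_0$) from the boundary --- at the price of an interior estimate that is no longer purely interior. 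The peripheral implications $(1)\Rightarrow(2)$ (Lemma~\ref{lem:comparison1}), $(4)\Rightarrow(1)$, and $(3)\Rightarrow(4)$ (two-case splitting near/far from $\partial\Omega$, with the limiting argument showing the near-boundary condition controls $u$ restricted to $\partial\Omega$) are handled essentially as in the paper.
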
 
Observe that the condition $(2)$ is always satisfied for any harmonic function $u$ on $\Omega$,
regardless of its behaviour at the boundary, while the  condition $(4)$ implies that the boundary values of $u$ is $\kappa$-continuous on $\partial \Omega$.
Therefore $(2)$ and $(4)$ are not equvalent without any condition on the behaviour of $u$ at the boundary.

The main step in the proof of our theorem is the following lemma whose proof is inspired from \cite{GKZ08} and \cite{LPT20}.
\begin{lemma} \label{lem:funlemma}
Let  $u : {\Omega} \longrightarrow \R$ be a  bounded  subharmonic function on $\Omega$ and $\kappa$ a modulus of continuity satisfying the condition (\ref{eq:conditionkappa}) .  Assume that  there exists a constant $C_0 > 0$ and $0< \delta_1 < \delta_0$ such that for $0< \delta < \delta_1$,
\begin{equation} \label{eq:convolution}
\mathcal{R}_\delta u (x)  - u (x) \,  \leq \, C_0  \, \kappa(\delta), \, \, \, \text{for} \, \, \, \, x \in  \Omega_\delta.
\end{equation}

Then there  exists constants $B > 1$,  $ \varepsilon_0 > 0$ with $B  \varepsilon_0  < \delta_1$ and a constant $C_3 > 0$ such that for $0< \delta < \varepsilon_0$ and $x \in \Omega_{B\delta}$, we have
 \begin{equation} \label{eq:maximum}
\mathcal{O}_\delta u(x)   \, \, \leq  \, \, C_3  \, \kappa(\delta). 
\end{equation}

In particular $u$ is $\kappa$-continuous on any compact set $E \Subset \Omega$.
\end{lemma}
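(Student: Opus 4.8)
The plan is to follow the bootstrap scheme of \cite{GKZ08} and \cite{LPT20}: from the smooth mean value bound (\ref{eq:convolution}) one first deduces an area‑characteristic bound, and then iterates the Poisson‑kernel comparison of Lemma \ref{lem:comparison2} over a geometric sequence of radii, the growth condition (\ref{eq:conditionkappa}) being exactly what makes the iteration converge. Write $F(x,\delta):=\mathcal M_\delta u(x)-u(x)\ge 0$ and $M:=\operatorname{osc}_\Omega u<\infty$. By Lemma \ref{lem:comparison1}, $\mathcal A_{\delta/2}u(x)-u(x)\le b_n^{-1}(\mathcal R_\delta u(x)-u(x))\le b_n^{-1}C_0\kappa(\delta)$ on $\Omega_\delta$ for $\delta<\delta_1$; replacing $\delta$ by $2\delta$ and using $\kappa(2\delta)\le 2\kappa(\delta)$ gives a constant $C_0'$ with $\mathcal A_\delta u(x)-u(x)\le C_0'\kappa(\delta)$ on $\Omega_{2\delta}$, $\delta<\delta_1/2$. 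Fixing a dilation factor $\theta\in(0,1)$, feeding this into Lemma \ref{lem:comparison2}, and bounding $(u(x+\delta y)-u(x))_+\le F(x,\delta)$ under the integral sign, we obtain the self‑improving inequality
\[
F(x,\theta\delta)\ \le\ C_1\,\kappa(\delta)\ +\ q\,F(x,\delta),\qquad x\in\Omega_{2\delta},\ \ 0<\delta<\delta_1/2,
\]
with $q:=2^n\theta\,(1-\theta)^{1-n}$ (the constants $c_n$ and $\sigma_{n-1}$ cancel, since $c_n\sigma_{n-1}=1$) and $C_1$ a dimensional multiple of $C_0'$.

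Choosing $\theta$ is the heart of the matter. Subadditivity of $\kappa$ forces $A>1$ in (\ref{eq:conditionkappa}) (since $\kappa(A_0t)\ge\tfrac{A_0}{1+A_0}\kappa(t)\ge\tfrac{A_0}{2n}\kappa(t)$ for $A_0\le1$), and (\ref{eq:conditionkappa}) self‑improves: telescoping shows it also holds for every power $A^m$, with $\limsup\le L^m/(2n)^{m-1}$ if $L<1$ is the $\limsup$ for $A$. Hence, after replacing $A$ by a large power, we may assume there are $\eta_0>0$ as small as we please and $t_1>0$ with $\kappa(At)<\tfrac{\eta_0 A}{2n}\kappa(t)$ for $0<t<t_1$. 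Set $\theta:=1/A$. Then for $\theta\delta<t_1$,
\[
\frac{\kappa(\delta)}{\kappa(\theta\delta)}=\frac{\kappa(A\,\theta\delta)}{\kappa(\theta\delta)}\ <\ \frac{\eta_0}{2n\theta}\ =:\ \Lambda ,
\]
so dividing the inequality above by $\kappa(\theta\delta)$ and writing $a(\delta):=F(x,\delta)/\kappa(\delta)$ gives $a(\theta\delta)\le C_1\Lambda+q\Lambda\,a(\delta)$. Since $q\Lambda=2^{n-1}\eta_0\big(n(1-\theta)^{n-1}\big)^{-1}$, choosing $\eta_0$ small enough (with $\theta$ correspondingly small) makes $q':=q\Lambda<1$; fix such $\eta_0$, hence such $A=1/\theta$.

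Now iterate at a fixed base radius: pick $\delta_\ast\in(0,\min(\delta_1/2,\,t_1))$. For $x\in\Omega_{2\delta_\ast}$ all radii $\theta^k\delta_\ast$, $k\ge0$, are admissible in the recursion (since $\Omega_{2\delta_\ast}\subset\Omega_{2\theta^k\delta_\ast}$ and $\theta^k\delta_\ast<\min(\delta_1/2,t_1)$), so summing the geometric series yields
\[
a(\theta^k\delta_\ast)\ \le\ \frac{C_1\Lambda}{1-q'}+(q')^{k}a(\delta_\ast)\ \le\ \frac{C_1\Lambda}{1-q'}+\frac{M}{\kappa(\delta_\ast)}\ =:\ C^\ast\qquad(k\ge0).
\]
For arbitrary $0<\delta<\delta_\ast$, taking $k$ with $\theta^{k+1}\delta_\ast<\delta\le\theta^k\delta_\ast$ and using that $F(x,\cdot)$ is nondecreasing together with $\kappa(\theta^k\delta_\ast)=\kappa(A\,\theta^{k+1}\delta_\ast)\le\lceil A\rceil\,\kappa(\delta)$ gives $F(x,\delta)\le C^\ast\lceil A\rceil\,\kappa(\delta)$ for $x\in\Omega_{2\delta_\ast}$. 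Finally, if $\mathcal O_\delta u(x)=u(y_1)-u(y_2)$ with $y_1,y_2\in B(x,\delta)$ then $y_1\in\bar B(y_2,2\delta)$, whence $\mathcal O_\delta u(x)\le\sup_{y\in B(x,\delta)}F(y,2\delta)$; applying the bound just proved at each such $y$ gives $\mathcal O_\delta u(x)\le 2C^\ast\lceil A\rceil\,\kappa(\delta)$ for $x$ in a fixed neighbourhood of the form $\Omega_{c\,\delta_\ast}$ and $\delta$ small, which is (\ref{eq:maximum}) with $B$ and $\varepsilon_0$ read off from $\delta_\ast$. The $\kappa$‑continuity on a compact $E\Subset\Omega$ then follows, since any two points of $E$ at small distance $\delta$ lie in a common ball $B(x,\delta)$, $x\in E$.

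The step I expect to be the real obstacle is making the iteration close up, i.e. obtaining $q\Lambda<1$: the factor $q$ carries the dimensional blow‑up $2^n$ of the Poisson‑kernel comparison in Lemma \ref{lem:comparison2}, and $\Lambda\approx\eta_0/(2n\theta)$ can absorb it only if the ratio $\kappa(A\delta)/(A\kappa(\delta))$ can be pushed below $\sim n/2^{n}$ — which is in general impossible for a single prescribed $A$ and becomes available only through the self‑improvement of (\ref{eq:conditionkappa}) under $A\mapsto A^m$. Everything else — the reduction via Lemma \ref{lem:comparison1}, the geometric summation, the passage from $F$ to $\mathcal O_\delta$, and the bookkeeping of the nested sets $\Omega_{c\delta}$ so that each appeal to Lemmas \ref{lem:comparison1}–\ref{lem:comparison2} is legitimate — is routine.
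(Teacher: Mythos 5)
Your proposal is correct in substance, but it reaches the key two‑scale inequality by a genuinely different mechanism than the paper. The paper never invokes Lemma \ref{lem:comparison2}: it picks the two points $\xi_0,y_0\in\bar B(x_0,\delta)$ realizing $\mathcal O_\delta u(x_0)$, splits the volume mean $\Lambda_{A\delta}u(\xi_0)$ over $B(\xi_0,A\delta)$ into the piece over $B(y_0,(A-2)\delta)$ (estimated from below by the sub‑mean value inequality at $y_0$) and the remaining annular piece (estimated by the oscillation at scale $(A+1)\delta$), and combines this with Lemma \ref{lem:comparison1} to get $\mathcal O_\delta u(x_0)\le \tfrac{2n}{A}\,\mathcal O_{(A+1)\delta}u(x_0)+C\kappa(\delta)$; the contraction factor $2n/A$ is exactly matched to the constant $2n$ in (\ref{eq:conditionkappa}), after which the paper divides by $\kappa$ and iterates geometrically from a fixed macroscopic scale — the same normalization-and-iteration step you perform. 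Your route instead works with $F(x,\delta)=\mathcal M_\delta u(x)-u(x)$ and the Poisson‑kernel comparison of Lemma \ref{lem:comparison2}, whose factor $2^n\theta(1-\theta)^{1-n}$ is indeed too large for (\ref{eq:conditionkappa}) taken at face value; you close the loop with the observation — which is correct, and worth recording — that (\ref{eq:conditionkappa}) self‑improves under $A\mapsto A^m$ (telescoping gives $\limsup\le L^m/(2n)^{m-1}$), so the specific constant $2n$ in the hypothesis is immaterial. Your approach thus buys the insight that any fixed constant in (\ref{eq:conditionkappa}) would do and avoids the two‑point volume decomposition; the paper's approach avoids the dimensional $2^n$ of the Poisson kernel and works directly with the oscillation, so no self‑improvement is needed. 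One caveat, which applies equally to the paper's own argument: since both iterations climb to scales comparable with a fixed base radius ($\varepsilon_0$ in the paper, your $\delta_*$), both in fact establish (\ref{eq:maximum}) for $x$ in a fixed subdomain of the form $\Omega_{c\varepsilon_0}$ (equivalently, on each compact $E\Subset\Omega$ with a constant depending on $E$), not literally for every $x\in\Omega_{B\delta}$ with $\delta\to0$; your conclusion therefore matches what the paper's proof actually delivers and what the ``in particular'' statement uses.
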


\begin{proof}
We claim  that the condition (\ref{eq:conditionkappa}) implies that  we can choose  $A > 2$ large enough and $\delta_2 >0$ small enough  such that $(2 A + 1) \delta_2 < \delta_1$ and
\begin{equation} \label{eq:nu}
\theta := 2  n \sup_{0< t \leq \delta_2} \frac{\kappa( (A + 1) t)}{ A \kappa(t)}   \, < 1.
\end{equation}
Indeed  by (\ref{eq:conditionkappa}) we see that  there exists   $A' > 0$, $\nu < 1 \slash 2n$ and $0< \delta'_2 < \delta_0$ small enough   such that 
$$
\sup_{0< s \leq \delta'_2} \frac{\kappa(A'  s)}{ A' \kappa(s)} < \nu  < 1 \slash 2 n.
$$ 
Fix an integer $N > 1$ and apply this inequality for $s = N t$.  Then by subadditivity of $\kappa$, we have  for $0 < t < \delta_2 := \delta'_2 \slash N$, 
$$
  \frac{\kappa( (N A' t )}{ N A' \kappa( t)}  \leq  \frac{\kappa(N A' t )}{ A' \kappa( N t)}  = \frac{\kappa(A' s )}{ A'\kappa( s)}  < \nu.
$$
Now choose $N > 1$ so large that $N A' > 3$ and set $A := N A' - 1 > 2$. Then the previous inequality implies that for $0 < t < \delta_2$, 
$$
\frac{\kappa( ((A +1) t )}{ A \kappa( t)}   =  \frac{\kappa( (N A' t )}{ N A' \kappa( t)} \frac{N A'}{A} <  \nu \frac{N A'}{A}.  
$$
 Since $\frac{N A'}{A} =  \frac{N A'}{N  A' - 1} \to 1$  as $N  \to + \infty$ and $\nu < 1 \slash 2n$, we can find  $N > 2$ large enough so that $ \nu \frac{N A'}{A} < 1  \slash 2n$, which implies the inequality (\ref{eq:nu}) and proves the claim. 
 
Now choose $\delta_2 >0$ so small that   $(2 A + 1)\delta_2 < \delta_1$ and fix  $0 < \delta <  \delta_2$.

The first step of the proof follows the scheme given in \cite{GKZ08}.
Observe that the condition (\ref{eq:convolution}) implies that $u$ is continuous on $\Omega$. 

 Let $x_0 \in \Omega_{(2 A + 1)\delta}$. Then $\bar{B} (x_0,\delta \subset \Omega$ and  by continuity, there exists $\xi_0 , y_0\in \bar{B} (x_0,\delta)$ such that
\begin{equation}\label{eq2}
\mathcal{O}_\delta u  (x_0)  =u(y_0)-u(\xi_0).
\end{equation}
 
Since $\text{dist} (x_0, \partial \Omega) > (2 A + 1)\delta$, we have $\bar{\B} (\xi_0,2A\delta) \subset \Omega$.
 Set  $a := A - 2 > 0$  and $r := A \delta = (a + 2) \delta$. Then by Lemma \ref{lem:comparison1}, it follows that

\begin{equation} \label{eq:Ineq1}
\mathcal{R}_{2r} \,  u (\xi_0)-u(\xi_0) \geq b_n\left(\Lambda_{r} u (\xi_0) -u(\xi_0)\right),
\end{equation}
where $b_n := \sigma_{n-1} \int_{1/2}^1s^{n-1}\rho(s)ds$.

Since $\bar{\B}(y_0,a\delta)\subset \bar{\B}(\xi_0,  r) \subset \Omega$,  we can write,

\begin{eqnarray}
\frac{1}{\tau_{n}r^{n}}\int_{{\B}(\xi_0,r)}u(y)d\lambda_n (y) &=& \frac{1}{\tau_{n}r^{n}}\int_{\mathbb{B}(y_0,a\delta)}u(y)d\lambda_n (y) \nonumber \\
&+& \frac{1}{\tau_{n}r^{n}} \int_{\mathbb{B}(\xi_0,r)\setminus\mathbb{B}(y_0,a\delta)}u(y)d\lambda_n (y).
\end{eqnarray}
By subharmonicity of $u$, we have 
$$
\frac{1}{\tau_{n}r^{n}}\int_{\mathbb{B}(y_0,a\delta)} u(y)d\lambda_n (y) 
\geq \frac{a^{n}}{(a+2)^{n}}u(y_0).
$$
On the other hand, we have

\begin{eqnarray*}
\frac{1}{\tau_{n}r^{n}} \int_{\mathbb{B}(\xi_0,r)\setminus\mathbb{B}(y_0,a\delta)} u(y)d\lambda_n (y) 
 \geq    \left(1-\frac{a^{n}}{(a+2)^{n}}\right) \left(u (y_0) -  \mathcal{O}_ {r+\delta} \,  u (x_0)\right).
%& \geq &  \left(1-\frac{a^{2n}}{(a+1)^{2n}}\right) \left(u(\xi_0)- \frac{\kappa (r)}{\kappa (\delta)}\omega(\delta)\right)
\end{eqnarray*}

Therefore adding the last two inequalties, we obtain 
\begin{eqnarray*}
\Lambda_{r} u (\xi_0) - u(\xi_0) &\geq & \frac{a^{n}}{(a+2)^{n}} u(y_0) - u(\xi_0)  \\
&+ & \left(1-\frac{a^{n}}{(a+2)^{n}}\right) \left(u (y_0) -  \mathcal{O}_{r+\delta} \,  u (x_0)\right)\\
& = & \mathcal{O}_\delta u (x_0)   - \left(1-\frac{a^{n}}{(a+2)^{n}}\right) \mathcal{O}_{r+\delta} \,  u (x_0).
\end{eqnarray*}
From (\ref{eq:Ineg1}) and the previous estimate, we deduce that 
$$\mathcal{R}_{2 r} u  (\xi_0) - u(\xi_0)  \geq b_n \left(\mathcal{O}_\delta u (x_0)   - a_n  \mathcal{O}_{r + \delta} u (x_0)\right),
$$
where $a_n := (1-\frac{a^{n}}{(a+2)^{n}}$.

The second step of the proof will use  an idea of \cite{LPT20}.

Set $ f (t) := \mathcal{O}_t u (x_0)$ and $g(t) := \mathcal{R}_{2 t} u (\xi_0) - u(\xi_0)$ for $0 < t < \delta_2$ and  observe that $a_n \leq \frac{2 n}{A}$, with $A := a + 2$.
Then  for $0 < \delta < \delta_2$, we have 

$$
f(\delta) \leq (2 n/A)   f((A + 1) \delta) + (1\slash b_n) g (A \delta).
$$
Now since $\xi_0 \in \Omega_{2 A \delta}$ and $2 A\delta <  \delta_1$, it follows from (\ref{eq:convolution}) that $g (A \delta) \leq C_0 \kappa (2 A r)$. Then by subadditivity of $\kappa$, we have  for $0 < \delta < \delta_2$

$$
f(\delta) \leq  (2 n/A) f((A+1) \delta) + C_1 \kappa  (\delta),
$$
where $C_1 :=  (2 A +1) C_0\slash b_n$.

 Consider the quotient function $h(t) := f(t) \slash \kappa (t)$. From the previous estimate we deduce that  for $0 < \delta < \delta_2$,
$$
h (\delta) \leq  2 n   \frac{\kappa ((A+1)\delta)}{A \kappa (\delta)} h (B\delta)  + C_1,
$$
where $B := A +1$. Now as $A > 2$ is choosen so that the condition (\ref{eq:conditionkappa}) is satisfied, there exits  $0< \delta_3  < \delta_2$ so that for $0 < \delta <\delta_3$, we have  $ 2 n   \frac{\kappa ((A+1) \delta)}{A\kappa (\delta)} =: \theta < 1$. Then for $0 < \delta <\delta_3$ we have
$$
h (\delta) \leq  \theta  h (B\delta) + C_1,
$$
Iterating this inequality we see that for any $0 < \delta < \delta_3$ and any $k\in \N$, we have
\begin{equation} \label{eq:fundeq}
h (\delta B^{-k} ) \leq  \theta^k  h (\delta) + C_2,
\end{equation}
where $C_2 := C_1 \frac{1}{1-\theta}$.

We claim that  this inequality implies  that $ h (t) $ is bounded near $0$. 
Indeed choose $\varepsilon_0 > 0$ such that $B^2 \varepsilon_0  < \delta_3$ and set
$$
M_0 := \max \{ h (t) \, ; \, \varepsilon_0  \leq t \leq B^2 \varepsilon_0 \}.
$$
Fix $0 < t < \varepsilon_0 $ and choose an integer $k$ so that $t B^k \in [\varepsilon_0 ,  B^2 \varepsilon_0]$.  Then applying the inequality (\ref{eq:fundeq}) with $\delta = t B^k$ we obtain
$$
h (t) \leq \theta^k h (t B^k) + C_2 \leq M_0 + C_2 =: C_3,
$$
which proves our claim.

Let $E \Subset \Omega$ be a compact set and $r_0 := \mathrm{dist} (E, \partial \Omega) >0$. Then for $0 < \delta < r_0 \slash (2A + 1)$ , $E \subset \Omega_{(2A + 1)\delta}$. Then we can apply the previous estimate and get for $x \in E$ and $y \in E$ with $\vert x - y\vert \leq \delta$, $u (x) - u(y) \leq C_3 \kappa (\delta)$.
\end{proof}

We do not know if the condition (\ref{eq:convolution}) implies  that the condition (\ref{eq:maximum}) holds for any $x \in \Omega_\delta$, nor if it implies that $ u $ is $\kappa$-continuous on $\bar{\Omega}$.

\subsection{Proof of the main theorem}
We are now ready to prove the main theorem stated in the introduction.

\begin{proof}  %It is clear that $(4) \Longrightarrow (1) \Longrightarrow (2)$. The implication    $(2) \Longrightarrow (3)$ follows from Lemma \ref{lem:funlemma} without any continuity assumption  near the boundary.  It remains to show that  $(3) \Longrightarrow (4)$.

%Condition $(3)$ implies that  there exists constants $B > 1$,  $0< \delta_3 < \delta_0$ and $L_3 > 0$ such that for $0< \delta < \delta_2$ and $x \in \Omega_{B \delta}$, we have
% \begin{equation} \label{eq:maximum1}
%\mathcal O_{\delta} u (x)  \leq  L_3 \kappa(\delta). 
%\end{equation}
  It follows from  from the assuptions od the main theorem  that $u$ is continuous on $\overline{\Omega}$. Therefore   we can find $x_0\in \overline{\Omega}$, $\xi_0\in\overline{\Omega}$ such that  $|x_0-\xi_0|\leq\delta$ and
\begin{equation}\label{eq2}
\kappa_u (\delta)= \sup_{\xi, x \in \bar \Omega}  (u (\xi) - u(x)) =u(\xi_0)-u(x_0).
\end{equation}

Take $0 <  \delta_3$ small enough so that $(B + 1) \delta_3 < \delta_2$ and fix $0 < \delta < \delta_3$. Then there are two cases to be considered for the point $x_0$.

1)  If $x_0 \in \Omega$ and $\mathrm{dist} (x_0,\partial \Omega) > B \delta$, then $x_0 \in \Omega_{B \delta}$ and $\xi_0 \in \bar{\B} (x_0,\delta)$ and then  by the inequality  (\ref{eq:maximum}) we have $\kappa_u (\delta) = u (\xi_0)  - u (x_0) \leq L_3 \kappa(\delta).$

2)  If  $x_0 \in \bar{\Omega}$ and  $\mathrm{dist} (x_0,\partial \Omega) \leq  B\delta$,  we can choose $y_0 \in \partial \Omega$ such that $\vert y_0 - x_0\vert = \mathrm{dist} (x_0,\partial \Omega) \leq B \delta$.
 Then $\vert\xi_0 - y_0\vert \leq (B + 1)\delta \leq \varepsilon_0$ and $\vert y_0 - x_0\vert \leq \varepsilon_0$. Since $u$ is $\kappa$-continuous  near the boundary , taking $\delta_3$ small enough,  it follows that $\vert u(x_0) - u (y_0)\vert \leq C_0 \kappa (B \delta)$ and $\vert u(\xi_0) - u (y_0)\vert \leq C_0 \kappa ((B + 1)\delta)$.

This implies that
\begin{eqnarray*}
\kappa_u (\delta) = u (\xi_0) - u(x_0) &\leq & u (\xi_0) - u(y_0)  + u(y_0) - u(x_0)  \\
&\leq  & C_0 \kappa (B \delta) +  C_0 \kappa ((B + 1)\delta,
\end{eqnarray*}
 which by subadditivity implies
$\kappa_u (\delta) \leq 2(B + 2)  C_0 \kappa (\delta)$. This proves the theorem.
\end{proof}

\noindent{\bf Question :} Is the main theorem true for for any modulus of continuity?

\subsection{The case of quasi-plurisubharmonic functions}

Let $(X,\omega)$ be a compact Hermitian manifold of complex dimension $n$. Let $d$ be the geodesic  distance on $X$ associated to the metric $\omega$.  Let $\varphi$ be an $\omega$-plurisubharmonic function on $X$.  We  define the modulus of continuity of $\varphi$ as follows. For $\delta > 0$ set
\begin{equation} \label{eq:moduluscontinuity}
\kappa_\varphi (\delta) := \sup \{  \varphi (x) - \varphi (y) \, ; \, (x,y) \in X^2 , d(x,y) \leq \delta\}. 
\end{equation}

On the other hand,   we can define the local regularization $\mathcal{R}_\delta \varphi$ of $\varphi$ on a neighbourhood of each point $x_0$ using a local chart $(U,F)$ centered at $x_0$ as follows :  if $F : U \longrightarrow \C^n$ is a biholomorphism from a neighbourhood $U$  of $x_0$ to a bounded domain $\Omega \Subset \C^n$ such that $F (x_0) = 0$. Then we define for $x \in U_\delta := F^{-1}(\Omega_\delta)$,
$$
\mathcal{R}_\delta \varphi (x) := (\varphi \circ F^{-1})_\delta \circ F (x),
$$
where $\Omega_\delta := \{z \in \Omega \ ; \ \text{dist} (z, \partial \Omega) > \delta\}$ and $ (\varphi \circ F^{-1})_\delta$ is the standard  regularization of the quasi-psh function $   \varphi \circ F^{-1}$ on $\Omega$.

We consider a modulus of continuity  $\kappa : [0,l] \longrightarrow \R^+$ satisfying the following growth condition 
\begin{equation} \label{eq:conditionkappa2}
\exists A >0, \, \, \,  \limsup_{t \to 0^+}  \left(\frac{ 4 n \kappa(A t}{ A \kappa(t)} \right)  \, < \, 1.
\end{equation}

The following result was used in \cite{DDGKPZ14} for a H\"older modulus of continuity and was proved recently in \cite{LPT20} in that case. We will prove the following more general version using our previous results. 
\begin{theorem}
 Let  $\kappa$ be a modulus of continuity satisfying the condition (\ref{eq:conditionkappa2}) and $ \varphi :  X \longrightarrow \R$ be a bounded $\omega$-plurisubharmonic function on  $X$.  Assume that  for any point $x_0 \in X$ there exists a local chart $(U,z)$ contered at $x_0$ and  constants $C_1 > 0$ and $0< \delta_1 << 1$ such that for any $x \in U$ and any $0< \delta < \delta_1$,
\begin{equation} \label{eq:LocalReg}
\mathcal{R}_\delta \varphi (x)   - \varphi (x) \leq C_1 \kappa(\delta), 
\end{equation}
where $\mathcal{R}_\delta \varphi$ is a local regularization of $\varphi$ in the local chart $U$.

Then there exists a constant $C_2 > 0$  such that  for any $x \in X$ and $y \in X$, we have 
 $$
  \vert u(x) - u(y) \vert \leq   C_2 \kappa (d( x , y)).
 $$
\end{theorem}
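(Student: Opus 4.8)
The plan is to localise the problem in holomorphic charts, remove the non-positivity of $dd^c\varphi$ by adding a smooth plurisubharmonic correction so that Lemma \ref{lem:funlemma} can be applied to a genuine subharmonic function in real dimension $2n$, and finally to globalise using the compactness of $X$. First I fix a point $x_0\in X$ together with a chart $(U,F)$ as in the hypothesis, $F:U\to\Omega\Subset\C^n$ a biholomorphism with $F(x_0)=0$, so that (\ref{eq:LocalReg}) holds for $0<\delta<\delta_1$ on $U_\delta=F^{-1}(\Omega_\delta)$. Set $v:=\varphi\circ F^{-1}$, a bounded function on $\Omega$ satisfying $dd^cv\ge -F_*\omega$ in the sense of currents. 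For any $\Omega'\Subset\Omega$ the push-forward $F_*\omega$ is a smooth positive $(1,1)$-form, bounded on a neighbourhood of $\overline{\Omega'}$, so there is a constant $C>0$ for which $u:=v+C|z|^2$ is plurisubharmonic on $\Omega'$; viewing $\C^n=\R^{2n}$, $u$ is then a bounded subharmonic function on $\Omega'$.

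Next I check that $u$ satisfies hypothesis (\ref{eq:convolution}) of Lemma \ref{lem:funlemma} in dimension $N=2n$. Since the mollifier $\rho$ is radial, $\mathcal R_\delta(C|z|^2)(x)=C|x|^2+c\,\delta^2$ with $c=C\int_\B|\xi|^2\rho(\xi)\,d\lambda_{2n}(\xi)$, hence $\mathcal R_\delta u(x)-u(x)=\mathcal R_\delta v(x)-v(x)+c\,\delta^2\le C_1\kappa(\delta)+c\,\delta^2$ on $\Omega'_\delta$ for small $\delta$. Because $\kappa$ is a non-trivial subadditive function with $\kappa(0)=0$, iterating $\kappa(l)\le m\,\kappa(l/m)$ yields $\kappa(\delta)\ge c'\delta$ for small $\delta$; thus $c\,\delta^2=o(\kappa(\delta))$ and $\mathcal R_\delta u-u\le C_1'\,\kappa(\delta)$ on $\Omega'_\delta$ for small $\delta$. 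Since (\ref{eq:conditionkappa2}) is exactly condition (\ref{eq:conditionkappa}) with $n$ replaced by $N=2n$, Lemma \ref{lem:funlemma} provides $B>1$, $\varepsilon_0>0$ and $C_3>0$ with $\mathcal O_\delta u(x)\le C_3\,\kappa(\delta)$ for $0<\delta<\varepsilon_0$ and $x\in\Omega'_{B\delta}$. Subtracting the smooth correction, $\mathcal O_\delta v(x)\le\mathcal O_\delta u(x)+\operatorname{osc}_{B(x,\delta)}(C|z|^2)\le C_3\,\kappa(\delta)+O(\delta)\le C_3'\,\kappa(\delta)$, using once more $\delta\le\kappa(\delta)/c'$; taking $\delta=|x-y|$ this gives $|v(x)-v(y)|\le C_3'\,\kappa(|x-y|)$ for $x,y$ in a fixed $\Omega''\Subset\Omega'$ with $|x-y|$ below a threshold depending only on $\Omega''$ and $\Omega'$.

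Now I transfer this to $X$ and globalise. Shrinking, set $U''=F^{-1}(\Omega'')$; on $U''$ the chart $F$ is bi-Lipschitz between the geodesic distance $d$ of $(X,\omega)$ and the Euclidean distance, say $\lambda^{-1}|F(x)-F(y)|\le d(x,y)\le\lambda|F(x)-F(y)|$ with $\lambda=\lambda(x_0)\ge1$. Combining with the previous estimate and with $\kappa(\lambda t)\le\lceil\lambda\rceil\,\kappa(t)$ (subadditivity), we obtain $|\varphi(x)-\varphi(y)|\le C(x_0)\,\kappa(d(x,y))$ for $x,y\in U''$ with $d(x,y)$ small. Cover the compact manifold $X$ by finitely many such neighbourhoods $U''_1,\dots,U''_k$, let $\eta>0$ be smaller than a Lebesgue number of this cover and than the finitely many thresholds above, and set $M:=\operatorname{osc}_X\varphi<\infty$. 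If $d(x,y)<\eta$, then $x$ and $y$ belong to a common $U''_j$ and the local bound applies; if $d(x,y)\ge\eta$, then $|\varphi(x)-\varphi(y)|\le M\le(M/\kappa(\eta))\,\kappa(d(x,y))$. Taking $C_2$ to be the largest of the finitely many constants involved proves the theorem.

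I expect the only genuinely delicate point to be the reduction from the quasi-plurisubharmonic $\varphi$ to a subharmonic function: one has to add in each chart a smooth plurisubharmonic potential dominating $F_*\omega$, check that it perturbs $\mathcal R_\delta$ and $\mathcal O_\delta$ only by $O(\delta^2)$ and $O(\delta)$ respectively, and observe that these errors are absorbed by $\kappa(\delta)$ precisely because a modulus of continuity grows at least linearly near $0$. The metric comparison in each chart and the covering argument are then routine.
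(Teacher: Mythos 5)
Your argument is correct, and its core is the same reduction the paper uses: localize in a chart, turn the $\omega$-psh function into a genuine plurisubharmonic (hence subharmonic in $\R^{2n}$) function by adding a smooth correction, check that the correction perturbs $\mathcal{R}_\delta$ only by $O(\delta^2)$ (absorbed since any nontrivial modulus of continuity satisfies $\kappa(\delta)\geq c'\delta$), and then invoke Lemma \ref{lem:funlemma}, noting that (\ref{eq:conditionkappa2}) is exactly (\ref{eq:conditionkappa}) in real dimension $2n$. Where you genuinely diverge is the passage from local to global: the paper works with extremal pairs $(x_\delta,y_\delta)$ realizing $\kappa_\varphi(\delta)$, extracts by compactness a subsequence converging to a single point $x_0$, and applies the local estimate only in one chart around that limit point, concluding via a $\limsup$ argument; you instead establish a uniform $\kappa$-estimate in each chart of a finite cover of $X$ and conclude with a Lebesgue-number argument, handling pairs with $d(x,y)\geq\eta$ by the trivial oscillation bound $M\leq (M/\kappa(\eta))\,\kappa(d(x,y))$. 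Your route is more quantitative and constructive: it avoids subsequence extraction, makes the constant explicit as a maximum over finitely many chart constants, and explicitly covers non-small distances (which the paper's statement needs but its proof leaves implicit); the paper's route is shorter because it needs the local constant only near the limit point and never has to track uniformity over a cover. You also spell out two details the paper treats implicitly — that $\mathcal{R}_\delta$ of the smooth correction deviates by $O(\delta^2)$ (thanks to the radial mollifier) and that such errors, as well as the $O(\delta)$ oscillation of the correction, are dominated by $\kappa(\delta)$ — which is a worthwhile clarification.
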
 
\begin{proof}  It follows  from our hypothesis that the function $u$ is continous on $X$.  Indeed fix an arbitrary point $x_0 \in X$ and localize the problem in a neighbourhood of $x_0$ so that the inequailty (\ref{eq:LocalReg}) is staisfied.  
Fix a neighbourhood $Y$  of $x_0$in $U$  and  a  biholomorphism $F : Y \longrightarrow F(Y) \subset \C^n$ from $Y$ to  a neighbourhood of the closed euclidean unit ball $\bar{\B} \subset \C^n$ so that the point $x_0$ is sent onto $0$.
Since $\omega > 0$ there exists a constant $C > 0$ such that $\beta \leq \omega \leq C  \beta $ on  $Y$, where $F_* (\beta))$ is a multiple of  the standard K\"ahler form on $\C^n$.  Since $u$ is $\omega$-plurisubharmonic, and $\omega \geq \beta$, $u$ is $\beta$-plurisubhramonic on $Y$ and we can choose a local smooth potential $w$ for $\beta $ on  $Y$ so that the function  $v  := \varphi + w$ is plurisubharmonic on $Y$. 
 
Since $d_\omega  (x,y) \sim d_\beta (x,y) $ on $Y$ and $\mathcal{R}_\delta v = \mathcal{R}_\delta \varphi + O (\delta)$ on $Y$ , we are then reduced to the case where $v$ is a plurisubharmonic function on a neighbourhood  of $\bar{\B}$ which satisfies $\mathcal{R}_\delta v - v \leq L_1 \kappa (\delta)$ on a neighbourhood of $\bar \B$ for some constant $L_1 > 0$. This implies that $v$ is continuous on $\B$, hence $u$ is continuous on a neighbourrhood of $x_0$. 
This proves the continuity of $\varphi$ on $X$ since $x_0$ was arbitrary.

In remains to prove the estimate on the modulus of continuity of $\varphi$.  By continuity of $u$  there exists 
$(x_\delta, y_\delta) \in X^2$ such that $d (x_\delta,y_\delta) \leq \delta$ and
$$
\kappa_\varphi (\delta) = \varphi (x_\delta) -  \varphi  (y_\delta).
$$ 
We want to show that there exists a constant $C_2 > 0$ such that 
$$
\limsup_{\delta \to 0^+}  \frac{\kappa_\varphi (\delta)}{\kappa (\delta)} \leq  C_2.
$$

It's enough to show that the limsup along any sequence $\delta_j \to 0$ is uniformly bounded by the same constant $C_2$. Up to extracting a subsequence, we can assume  by compactness  that there exists a point $x_0 \in X$  such that  $(x_{\delta_j},y_{\delta_j}) \to (x_0,x_0)$ as $j \to + \infty$. 
 
 Since  $\mathcal{R}_\delta v - v= \mathcal{R}_\delta \varphi - \varphi + O (\delta) $, applying the previous 
localization process  at the point $x_0$, we are reduced to the case where $v$ is a plurisubharmonic function on a neighbourhood $\Omega$ of $\bar{\B}$ which satisfies $\mathcal{R}_\delta v - v \leq C_1 \kappa (\delta)$ on a neighbourhood of $\bar \B$.  Then we can assume that  all the points  $x_{\delta_j}$ and $y_{\delta_j}$ belong to $\B$ for  any $j\in \N$.

From Lemma \ref{lem:funlemma} there exists a constant $L_2 > 0$ such that  for $j >  1$ large enough,  we have $v (x_{\delta_j}) - v(y_{\delta_j}) \leq L_2 \kappa (\delta_j)$, which implies  that   $\kappa_\varphi (\delta_j) = \varphi (x_{\delta_j}) - \varphi (y_{\delta_j}) \leq C_2 \kappa (\delta_j)$.  The theorem is proved.
\end{proof}

\smallskip

\smallskip

 {\bf Aknowledgements:} We thank Chinh H. Lu for a careful reading of the preliminary version of this note and for useful comments on the proof of Lemma 3.3. We also thank Vincent Guedj and Henri Guenancia for useful discussions on the subject of this note.

\end{document}